\newcommand{\cvl}{\stackrel{l}{\longrightarrow}}
\newcommand{\cvloi}{\stackrel{\mathbb{L}}{\longrightarrow}}
 \newcommand{ \un }{1\!\!1}
 \newcommand{ \p }{\mathbb{P} }
 \newcommand{ \tpa }{\mathbb{P}^{V} }
 \newcommand{ \pa }{\mathbb{P}^{V} }
  \newcommand{ \pao }{\mathbb{P}^{V}_{0} }
 \newcommand{ \Ea }{\mathbb{E}^{V}}
 \newcommand{ \tEa }{\mathbb{E}^{V}}
 \newcommand{ \tF }{\tilde{F}}
  \newcommand{ \bF }{\bar{F}}
 \newcommand{ \tS }{\tilde{\sum}}
 \newcommand{ \bS }{\bar{\sum}}
  \newcommand{ \tSn }{\tilde{\sum}_{(\log n)^{2- \epsilon}}}
  \newcommand{ \DL }{\lim_{n_1 \rightarrow +\infty}\lim_{n \rightarrow +\infty}}
\newcommand{ \dis }{ \mathcal{D} }
 \newcommand{ \R }{ \mathbb{R} }
 \newcommand{ \Z }{ \mathbb{Z} }
 \newcommand{\N}{ \mathbb{N} }
 \newcommand{ \V }{\textrm{Var} }
 \newcommand{ \Ct }{ \mathcal{C} }
  \newcommand{ \Bt }{ \mathcal{B} }
 \newcommand{ \f }{ \mathcal{F} }
 \newcommand{ \e}{ \mathcal{E} }
\newcommand{ \ct }{ \textrm{const} }
\newcommand{\Ht}{ \mathcal{H}}
\newcommand{\tpi}{\pi}
\newcommand{\ttpi}{\tilde{\pi}}
\newcommand{ \Rf}{ \mathcal{R}}
\newcommand{ \Rt}{ (\Rf_{m_n}^j)^{-1} }
\newcommand{ \Cm}{ \mathcal{C}_{m_n}^j }
\newcommand{ \Ce}{ \mathcal{C} }
\newcommand{ \A}{ \mathcal{A} }
 \newcommand{ \B}{ \mathcal{B} }
 \newcommand{ \lo }{ \mathcal{L} }
\newtheorem{The}{{\bf Theorem}}[section]
\theoremstyle{definition}
\theoremstyle{plain}
 \newtheorem{Lem}[The]{Lemma}
 \newtheorem{Cor}[The]{\bf Corollary}
 \newtheorem{Pro}[The]{\bf Proposition}
 \theoremstyle{definition}
\newtheorem{Rem}[The]{{\bf Remark}}
 \newenvironment{Pre}{\noindent \textbf{Proof.} \\ }{$\
 \blacksquare$}
\title{The local time of a random walk on growing hypercubes
\\ \vspace{1cm}
 \large{Pierre Andreoletti} \footnote{Laboratoire MAPMO - C.N.R.S. UMR 6628 - F\'ed\'eration Denis-Poisson, Universit\'e d'Orl\'eans, 
(Orl\'eans France). \newline \vspace{0.1cm}  $\quad$  MSC 2000 60G50; 60J55. \newline \vspace{0.5cm} \textit{Key words and phrases :  random environment, reversible markov chain, Dirichlet method, recurrent regime, local time} } \textrm{ }    }
\begin{document}
\maketitle

\begin{abstract} 
We study a random walk in a random environment (RWRE) on $\Z^d$, $1 \leq d < +\infty$. The main assumption is that conditionned on the environment the random walk is reversible. Moreover we construct our environment in such a way the walk can't be trapped on a single point like in some particular RWRE but in some specific d-1 surfaces. These surfaces are basic surfaces with deterministic geometry. We prove that the local time in the neighborhood of these surfaces is driven by a function of the (random) reversible measure. As an application we get the limit law of the local time as a process on these surfaces. 
\end{abstract}

\section{Introduction  and results}

Multidimensional RWRE  have been studied a lot in the past thirty years in many different directions, so many directions that we can not be exhaustive here, we recommand  \cite{Zeitouni} for a survey on some of those directions. \\
One way to construct a RWRE is to start with a reversible markov chain, in our case it will be a reversible nearest neighborhood random walk. We also assume that the reversible measure is a function of a certain potential $V$, which is a function from $\Z^d$ to $\R$.  Now if  this potential also called environment is random we get a RWRE. \\
Under additional assumptions on $V$, R. Durrett \cite{Durett} shows that unlike the simple random walk in $\Z^d$ for $d \geq 3$ , these RWRE can be recurrent (for almost all environment) and sub-diffusive. He also points out that the walk localize itself but due to the dimension $d>1$ the point of localization is not easy to characterize contrary to the one-dimension case of Ya. G. Sinai \cite{sinai}. 
Also he gets the joint distributions of the logarithm of the occupation times of the random walk (also called local time), before it reaches a certain level set of $V$. This is this last aspect we are interested in, in fact it is the link between the local time and the reversible measure, which is random for a RWRE, that we study in this paper. \\
The random environment $(V)$ we built here is different from the ones of R. Durrett: we build a potential that creates $d-1$ dimensional valleys. Indeed the idea that a particle can be trapped on a surface like a sphere or an hypercube, for example, by opposite random fields  is pretty physical. We also want the particle to be able to escape from a surface to reach another one where stronger fields act. Finally, in addition to be random, we do not want the fields to be uniform on a given surface, but that the number of values taken on this surface is finite. The definition of $V$ we will consider is the following: first $V$ is a function of two elementary processes. The first one, denoted $(S_k,k \in \Z)$, is a sum of i.i.d random variables. The second one, denoted $(\delta_x,x \in \Z^d)$, is a sequence of i.i.d random variable indexed by the vertices of $\Z^d$. This two processes are assumed to be independent. Then the potential $V$ on a point $x$ of $\Z^d$, is a  sum of random variables ($S$) with random number of terms ($\bar{x}+\delta_x$) where $\bar{.}$ is a norm on $\Z^d$. The way we construct this environment make it, at a given distance of the origin, strongly dependant from a site to the other.\\ Naturally we add hypothesis on the random environment, that is to the increments of $S$, and the distribution of the $\delta's$. With this hypothesis detailed below we get an almost surely recurrent random walk for almost all environment. It appears also that the walk is trapped on a set of points, whose shape depends on the norm $\bar{.}$. In fact, like in the one dimensional case, the random environment creates valleys, and as we restrict our analysis to the infinite norm on $\Z^d$ the shape of the valleys are hypercubes. 

An important fact is that these hypercubes, where the walk is trapped, are not level sets for the potential neither for the reversible measure denoted $\pi$, however 
they can be partitioned in level sets for $\pi$. A part of this work is devoted to the study of the local time of the walk on these level sets.

In the same way N. Gantert, Y. Peres and Z. Shi \cite{GanPerShi} (see also \cite{AndDiel} for the one-dimension continuous case) give the limit behavior of the local time in the neighborhood of the coordinate of the depth of a specific  valley we do it here but for a set of points. One of the main difference is the fact that we have to deal with the local time on a set of points and therefore several values of the reversible measure are involved.

The ingredient for the study of the local time of a random walk is mainly the computation of the moments of the excursions of the walk. Excursions are the amount of time the walk, starting from a point of the lattice (or a set of points), spends before it returns to this point (or this set of points). This moments can be written as an explicit function of  probabilities for the walk to touch a point before another.
It appears that for the (nearest neighborhood) 1-dimensionnal RWRE these probabilities  can be expressed explicitly from $V$. In dimension larger than $1$ it is not the case in general so other technics can be used, like Dirchlet methods for example. However this method leads in general to inequality instead of equality and therefore weaker results in term of precision and  convergence.
Our purpose is to show that we can be as precise as in the one dimensional case for non-trivial examples of multidimensional RWRE.

The paper is organized as follows, Sections 1.1 and 1.2 are devoted to the definitions of the basic processes together with a general description of the random environment. In  Section 1.3 we state our main results, and in Section 1.4 we give some examples. In Section 2 the reader will find the proof of the results, it is devided into three sub-sections, the first one concerns the random environment, the second the quenched result and the last one the annealed result. Also we add an appendix making this paper quite self-containt.

\subsection{Definitions and hypothesis}

 \textit{The random environment,} is a random sum of random variables:
first let us denote $(S_k,k \in \Z)$ the following sum 
\begin{eqnarray} 
 && S_{k}:=\left\{ \begin{array}{ll} \sum_{1\leq i \leq k} \eta_i, & \textrm{ if }\ k =1,2, \cdots \\
  \sum_{k+1 \leq i \leq 0} - \eta_i , &   \textrm{ if }\  k=-1,-2,\cdots \end{array} \right.  \nonumber \\
&& S_{0}:=0, \nonumber
 \end{eqnarray}
where $(\eta_i,i\in \Z)$ is a i.i.d. sequence of random variables, we denote $(\Omega_1,\f_1,P_1)$ the probability space associated to $(S_k,k \in \Z)$. Notice that $S_.$ is the typical potential of a one-dimensional random walk in random environment. \\ Also let $(\delta_x,x \in \Z^d)$ another sequence  of i.i.d. random variables  with integer values independent of the sequence $(\eta_i,i\in \Z)$, and $(\Omega_2,\f_2,P_2) $ is the associated probability space.
Let $d \in \N^*$, the potential $V:\Z^d \rightarrow \R$ also called random environment is defined by, for all $x \in \Z^d$ 
 \begin{eqnarray*}
V(x):= \left\{\begin{array}{l}  S_{\bar{x}+ \delta_x}, \textrm{  if } \bar{x}+ \delta_x \geq 0,  \\
 0 \textrm{ instead. }
 \end{array} \right. 
\end{eqnarray*}
where $\bar{x}$ is the infinite norm in $\Z^d$ $(\bar{x}=\max_{1 \leq j \leq d}\left\{|x_j|\right\})$. Also we denote $(\Omega_e,\f_e,P_e)$ the probability space induced by $(V(x),x \in \Z^d)$.

\noindent \textit{The random walk,} let us fix $V$, define a reversible Markov chain like an electical network following for example \cite{DoySne} or \cite{RusPer}. First we assign to each edge (x,y) of $\Z^d$ 
 a conductance denoted $\pi(x,y)$ defined as follows
 \begin{eqnarray*}
 \pi(x,y):=\exp(-1/2V(x)-1/2V(y))\un_{\{|x-y|=1\}}.
 \end{eqnarray*}
where $\un$ is the indicator function and $|x-y|=1$ means that $x$ and $y$ differ only by one coordinate $(|x-y|=1 \iff x=y\pm e_i, 1 \leq i \leq d)$. Also define the capacitances $\pi$, for all $x \in \Z^d$
 \begin{eqnarray*}
 \pi(x):=\sum_{z \in \Z} \pi(x,z) = \exp(-1/2V(x)) \sum_{z, |x-z|=1}\exp(-1/2V(z)).
 \end{eqnarray*}
Then, let $(X_n,n\in \N)$ a markov chain with state space $\Z^d$ and probability of transition $p$ given by, for all $x \in \Z^d$ and $y \in \Z^d$,
\begin{eqnarray*}
p(x,y):= \frac{\pi(x,y)}{\pi(x)}= \frac{\exp(-1/2V(y))}{ \sum_{z, |x-z|=1}\exp(-1/2V(z))}\un_{\{|x-y|=1\}}.
\end{eqnarray*}
By construction, $(X_n,n\in \N)$ is reversible and the reversible measure is given by
$\pi(.)$, notice also that $p(x,y)\neq p(y,x)$ so the walk is not symmetric. We denote $(\Omega_3,\f_3,\pa)$ the probability measure associated to this random walk and we denote $\pa_z(\cdot):=\pa(\cdot|X_0=z)$, for all $z \in \Z^d$. Finally the whole process, also denoted $X_.$, is defined on a probability space denoted $(\Omega, \f, \p)$. In particular for all $A_1\in \f_1$, $A_2\in \f_2$ and $A_3 \in \f_3$, $\p(A_1\times \A_2 \times A_3)=\int_{A_3}d\p^{V(w_1,w_2)}(w_3)\int_{A_2,A_1}dP_2(w_2)dP_1(w_1)$.\\
\textit{Hypothesis on the random environment,}
to study this walk we add hypothesis on $V$, we choose the following
\begin{eqnarray}
& & E_1[\eta_0]=0, \label{hyp1} \\
& & \V[\eta_0]>0,\label{hyp2}  \\
& & |\eta_0|  \textrm{ is bounded }P_1.a.s, \label{hyp3} \\
& & \delta_{0_{d}}  \textrm{ is bounded }P_2.a.s, \label{hyp4}
\end{eqnarray}
and $0_{d}$ is the origin of $\Z^d$. Under those hypothesis we can prove, see \cite{Pierre8}, that the walk is recurrent for almost all environnment, it is sub-diffusive, not localized in the neighborhood of a point of $\Z^d$  but on an hypercube of $\Z^d$ centred in $0_{d}$, and at a random distance of the origin. \\ 
\textit{Local time on hypercubes}, here we are interested in the asymptotic behavior of the local time $\lo$ of $X$:  let $x \in Z^d$ and $n \in \N$
$$\lo(x,n)=\sum_{k=1}^n\un_{X_k=x},$$
it is the time spent by the walk at the point $x$ within the intervall of time $[\![ 1,n]\!]$,   
moreover for any $U \subset \Z^d$ define $\lo(U,n)=\sum_{x\in U} \lo(x,n)$, the time spent by the walk in the sunbet $U$. More especially we are interested in the asymptotic behavior of the local time on the sets $\Ct_k$ with $k \in \N^*$ define as follows
\begin{eqnarray*}
\Bt_{k}&:=& \{y \in \Z^d, \bar{y} \leq k \}, \nonumber \\
\Ct_{k}&:=& \Bt_{k} \smallsetminus \Bt_{k-1}. 
\end{eqnarray*}
Notice that for $d=1$, $\Ct_{k}$ is reduced to 2 points, so we treat that case separately in Section \ref{dim1}.
As an application of our results, we will get the convergence in law of the supremum of the local time $\sup_{k \in \Z} \lo(C_k,n)$.
The results we present depend on some more definition on the random environment, we define it in the following section.

\subsection{Values of $\pi$ on $\Ct_k$ and potential conditionned to stay positive} 

In this section we assume $d \geq 2$.

\subsubsection{Possible values of $V$ on $\Ct_k,\ k \in \N$}

Let $\e^0$ the space state of $\delta_{0_d}$ and $|\e^0|$ its cardinal, from the definitions we give above it is clear that $V$ can take, at most, $|\e^0|$ values on a $\Ct_k$. Figure \ref{fig51} is a graphical representation of the values of $V$ on $\Ct_k$ if d=2 and if we assume, as an example, that $\delta$ can only take 3 values: -1, 0 and 1. A large segment outside the square means that at this point of the square $\delta_.=1$, a large segment inside the main square means that at this point of the square $\delta_.=-1$, no segment means that at this point of the square $\delta_.=0$.

\begin{figure}[hh]
\begin{center}
\input{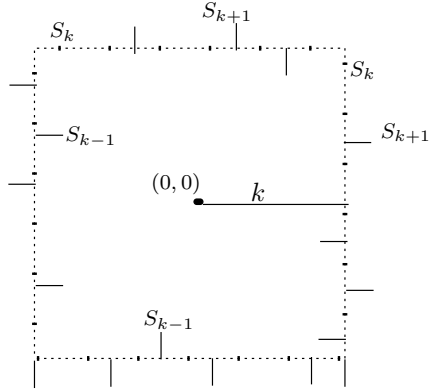} 
\caption{$d=2$, different values of $V$ on $C_k$} \label{fig51}
\end{center}
\end{figure} 
Unfortunatly, we will see that it is not the number of different values that takes the potential that is of importance for our results but the one of the reversible measure:

\subsubsection{Values of the reversible measure on $\Ct_k$, $k \in \N$}

Let $k \in \Z$, define $\mathcal{V}_{k,d}^{\pi}:=\{\textrm{disctinct } \pi(x), x \in \Ct_k \}$. 
Thanks to Hypothesis \ref{hyp4}, we know that $|\mathcal{V}_{k,d}^{\pi}|$ is bounded  and constant for $k$ large enough, for simplicity we denote this constant
\begin{eqnarray*}
n_d:=|\mathcal{V}_{k,d}^{\pi}|.
\end{eqnarray*}
Let, $1 \leq j \leq n_d$, we denote $(\pi^j_k,1 \leq j \leq n_d)$ the elements of $\mathcal{V}_{k,d}^{\pi}$ and 
\begin{eqnarray*}
\Ct_k^j:=\{ z \in \Ct_k, \pi(z)=\pi^j_k \},
\end{eqnarray*}
a subset of $\Ct_k$ which is a \textit{level set} for $\pi$. Of course $\Ct_k=\bigcup_{j=1}^{n_d}\Ct_k^j$, moreover we can give a simple expression for the $\pi^j_k$, let $x \in \Z^d$ we recall that 
\begin{eqnarray*}
\pi(x)=\exp(-1/2V(x)) \sum_{i=-d, i \neq 0}^d\exp(-1/2V(x+\textrm{sign}(i)e_{|i|}))
\end{eqnarray*}
where $(e_i,1 \leq i \leq d)$ is the canonical base of $\Z^d$. Let us define $\bar{\Ct}_k$ the sub-set of $\Ct_k$ without the edge i.e. $\bar{\Ct}_k=\{z \in \Z^d, \bar{z}=k, \#\{z_i, 1 \leq i \leq d :  z_i=k \} = 1 \}$, also denote $"\cdot"$ the standard Euclidean scalar product.  If we assume that $x \in \bar{\Ct}_k$ with for example $x \cdot e_1=k$, then $\overline{x\pm e_1}=k\pm1$, whereas $\overline{x\pm \textrm{sign}(i)e_{|i|}}=k$  for the other $i \neq 1$. Therefore following this example we get for all $x\in \bar{\Ct}_k$ with $x\cdot e_1=k$,
\begin{eqnarray*}
\pi(x) &=& \exp(-1/2S_{k+\delta_x}) \times \\ 
& & \left(\exp(-1/2S_{k+1+\delta_{x+e_1}})+\exp(-1/2S_{k-1+\delta_{x-e_1}})+ \sum_{i=-d, i \neq 0,1}^d\exp(-1/2S_{k+\delta_{x+\textrm{sign}(i)e_{|i|}}})\right).
\end{eqnarray*}
When $x \in \Ct_k \smallsetminus \bar{\Ct}_k$, we can get similar expressions. The conclusion of this is that, for all $k$ and $j$, there exists a sequence of integers $(a_0^j,a_1^j,\cdots,a_{2d}^j)$ such that 
\begin{eqnarray}
\pi^j_k= \exp(-1/2S_{k+a_0^j}) \sum_{l=1}^{2d}\exp(-1/2 S_{k+a_l^j}). \label{valpi}
\end{eqnarray}
Note that $a_0^j$ takes its values in $ \e^0$ and the other $a_i^j$ take their values on one of the following three sets, $\e^0$, $\e^{+}:=\{l+1, l \in \e^0\}$ and $\e^-:=\{l-1, l \in \e^0\}$. We also define $\e:=\{l-1,l,l+1, l \in \e^0\}$.
\\

\noindent To resume we can say that the $d-1$ hypercubes of $\Z^d$ $\Ct_k$ can be partitioned on level sets for $\pi$. Moreover, due to the $\delta$'s, the $\Ct_k^j$ have a random geometry, and the values of $\pi$ on these level sets are given by \ref{valpi}.

\subsubsection{Limit law of $S$ conditioned to stay positive \label{parSp}}
As we can see in the above expression of $\pi$, the reversible measure on the hypercubes involves the knowledge of random sums of random variables. Now looking at the expression of the $\pi^j_k$, we see that for fixed $\delta_.$, the reversible measure only depends on a sum of i.i.d. random variables $(S_.)$ which pretty looks like the one dimensional case (see for example \cite{GanPerShi}). \\
Let us recall basic random variable used in the one-dimensional case:
\begin{eqnarray*}
M_n &:=& \inf\{k>0: S_k-\min_{0\leq i\leq k}S_i \geq \log n+(\log n)^{1/2}\} \\
m_n&:=&  \inf\{k>0: S_k=\min_{0\leq i\leq M_n}S_i \} 
\end{eqnarray*}
These two random variables correspond to a the coordinate of the top ($M_n$) and the  bottom ($m_n$) of a one dimensionnal localization valley.
\noindent In what follows we also need to introduce $(\bar{S}_k,k)$ a sequence of random variables with the same distrbution as $S$ and conditioned to stay non-negative for $x>0$ and strictly positive for $x<0$. \cite{Golosov} and \cite{Bertoin} has shown independently that under \ref{hyp1} $\bar{S}$ is well defined and also that $\sum_{x=-\infty}^{+\infty}\exp(-\bar{S}_x)<+\infty,\ a.\ s.$ 
We are ready to define $\bar{\pi}^j_k$ for all $1 \leq j \leq n_d$ and $k \in \Z$
\begin{eqnarray*}
\bar{\pi}^j_k:= {\exp(-1/2\bar{S}_{k+a_0^j}) \sum_{l=1}^{2d}\exp(-1/2 \bar{S}_{k+a_l^j})} 
.
\end{eqnarray*}


\subsection{Results - Local time on hypercubes}

In this section we also assume that $d \geq 2$ and we start with the annealed results

\subsubsection{Annealed results}

\noindent Our first result deals with the limit distribution of the supremum of the local time taken over all the hypercubes $(\Ct_l,l \in \N)$:
\begin{The} \label{th1.1} Assume hypothesis \ref{hyp1}-\ref{hyp4} are satisfied, there exists a sequence of non-negative real $(p_j, 1 \leq j \leq n_d)$ with $\sum_{j=1}^{n_d} p_j=1$ and a sequence of vectors $(a_0^j,a_1^j,\cdots,a_{2d}^j, 1 \leq j \leq n_d)$ belonging to $\e^{\otimes n_d}$ such that
\begin{eqnarray*}
\ \frac{\sup_{l\in \N}\lo(\Ct_l,n)}{n} \cvloi \sup_{i \in \Z} \bar{\Pi}(i),  
\end{eqnarray*}
where for all $i \in \Z$
\begin{eqnarray}
\bar{\Pi}(i)=\frac{\sum_{j=1}^{n_d}p_j\bar{\pi}^{j}_i}{{\sum_{l=-\infty}^{+ \infty} \sum_{j=1}^{n_d}p_j\bar{\pi}^{j}_l }}, \label{1.14}
\end{eqnarray}
and $\cvloi$ is the convergence in law under $\p$. 
\end{The}

This result shows that the behavior of the local time depends on the different values taken by the invariant (reversible) measure on the hyper cubes $\Ct_k$. Of course the $\pi$'s depends itself of the values taken by the $\delta$ and $\bar{S}$. 
If we enter in the details, we see that like in the one dimensionnal case the reversible measure and the porcess $\bar{S}$  are the important ingredients, and that the $\delta$'s act like a noise. The new things is first that as we are looking  on a set of points, several values of the reversible measure appear (the numerator of $\pi_i^j$). The second new things is that the values of the reversible measure do not have the same importance, indeed the distribution of the $\delta$'s can give more importance to some of them and neglect other. This information is contained in the $p_j$, in particular, for this general case, we allow them to be nul. \\
 In the next section, we take some examples for the distribution of the $\delta$'s and show that all the sequences (the $(p_j,j)$ and the $(a_0^j,a_1^j,\cdots,a_{2d}^j,j)$) that appears here become explicit.

The above result is an easy consequence of the following result, in that one we look at the local time as a process on specific (random) hypercubes. 

\begin{The} \label{th1.3} Assume hypothesis \ref{hyp1}-\ref{hyp4} are satisfied, there exists a sequence of non-negative real $(p_j, 1 \leq j \leq n_d)$ with $\sum_{j=1}^{n_d} p_j=1$ and a sequence of vectors $(a_0^j,a_1^j,\cdots,a_{2d}^j, 1 \leq j \leq n_d)$ belonging to $\e^{\otimes n_d}$ such that
\begin{eqnarray*}
\left( \frac{\lo(\Ce_{m_n+i},n)}{n}, {i \in \Z} \right) \cvloi \left(\bar{\Pi}(i) , i \in \Z\right) 
\end{eqnarray*}
where the $\bar{\Pi}$, is given by \ref{1.14}.
\end{The}

\subsubsection{Quenched result}

The importance of the quenched results, a part from the fact that they are the key results to get the annealed ones, is that it contains the information that leads from the local time to the reversible measure.

\begin{Pro} Assume hypothesis \ref{hyp1}-\ref{hyp4} are satisfied. Let $\delta>0$, there exists a sequence of vectors $(a_0^j,a_1^j,\cdots,a_{2d}^j, 1 \leq j \leq n_d)$ belonging to $\e^{\otimes n_d}$, also for all $n$ there exists a subset $A_{n} \in \f_e$ with $\lim_{n \rightarrow + \infty} P_e(A_n) =1$ such that
\begin{eqnarray*}
\lim_{n \rightarrow + \infty}\inf_{V \in A_n}\p^V_{0}\left(\bigcap_{l \in \{-k, \cdots,k\}}\left\{\left|\frac{\lo(\Ce_{m_n+l},n)}{n}-\sum_{j=1}^{n_d}\Rf_{m_n+l}^j\right|\leq \delta \sum_{j=1}^{n_d}\Rf_{m_n+l}^j \right\} \right)=1,
\end{eqnarray*}
where 
\begin{eqnarray*}
\Rf^j_{p}= \frac{|\Ct_p^j|\pi_{p}^j}{ \sum_{x \in \B_{M_n}} \pi(x)}.
\end{eqnarray*}
\end{Pro}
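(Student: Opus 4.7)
The plan is to prove this quenched result via an excursion decomposition of $X_\cdot$ around the bottom $m_n$ of the valley, combined with the standard reversibility identities relating the occupation measure during an excursion to the invariant measure $\pi$. Once $X_\cdot$ is shown to be essentially trapped in $\B_{M_n}$ on $[\![1,n]\!]$, the proposition reduces to the algebraic identity
\begin{equation*}
\sum_{j=1}^{n_d}\Rf^j_{m_n+l}=\frac{\pi(\Ce_{m_n+l})}{\pi(\B_{M_n})},
\end{equation*}
which is exactly the fraction of time a reversible chain whose effective state space is $\B_{M_n}$ spends at the layer $\Ce_{m_n+l}$. I would first build $A_n$ by collecting the deterministic features of $V$ needed: the positions $|m_n|,M_n$ are $O((\log n)^2)$; the Sinai barriers separating $m_n$ from $\Ce_{M_n+1}$ on both radial sides exceed $\log n+(\log n)^{1/2}$; and the $\delta$-values in the annulus $\Ce_{m_n+l}$, $|l|\leq k$, realise the sequences $(a_i^j)$ and the multiplicities $|\Ct^j_{m_n+l}|$ entering $\Rf^j_{m_n+l}$. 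Under \ref{hyp1}--\ref{hyp4}, the one-dimensional Sinai-type theory of $S_\cdot$ (cf.\ \cite{sinai},\cite{GanPerShi}) yields $P_e(A_n)\to 1$.

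Next I would prove confinement. Because $V(x)=S_{\bar{x}+\delta_x}$ is a bounded-noise radial function, the projection $\bar{X}_\cdot$ compares to the one-dimensional RWRE in the potential $S_\cdot$, so $X_\cdot$ reaches $m_n$ from $0$ in time $\no(n)$ with $\p^V_{0}$-probability $1-\er(1)$. A Dirichlet/electrical-network bound then gives $\textrm{Cap}(m_n,\Ce_{M_n+1})\leq C\exp(-S_{M_n})$ while $\pi(m_n)\sim\exp(-S_{m_n})$, so a single excursion from $m_n$ escapes $\B_{M_n}$ with probability at most $Cn^{-1}(\log n)^{-1/2}$; as at most $n$ excursions fit into $[\![1,n]\!]$, only an $\er(1)$ fraction of them escape, and on this event we may replace $X_\cdot$ by the chain restricted to $\B_{M_n}$ without affecting $\lo(\Ce_{m_n+l},n)$ to leading order.

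Once confinement is in place, the reversibility identity $\Ea_{m_n}[\lo(y,T^+_{m_n})]=\pi(y)/\pi(m_n)$ (valid for every $y\in\B_{M_n}$; see the appendix and \cite{DoySne}), together with $\Ea_{m_n}[T^+_{m_n}\wedge\tau_{\B_{M_n}^c}]=\pi(\B_{M_n})/\pi(m_n)+\er(\cdot)$, gives, on summing over $y\in\Ce_{m_n+l}$,
\begin{equation*}
\frac{\Ea_{m_n}[\lo(\Ce_{m_n+l},T^+_{m_n})]}{\Ea_{m_n}[T^+_{m_n}\wedge\tau_{\B_{M_n}^c}]}=\frac{\pi(\Ce_{m_n+l})}{\pi(\B_{M_n})}=\sum_{j=1}^{n_d}\Rf^j_{m_n+l}.
\end{equation*}
Concentration around this expectation is obtained by a Chebyshev estimate on the $N_n$ successive excursions away from $m_n$ up to time $n$, which are i.i.d.\ conditionally on $V$: on $A_n$, $N_n$ is at least a positive power of $n$ and the relative variances of the excursion contributions are $O(1)$, so a union bound over the $2k+1$ values of $l$ preserves the uniform $\delta$-precision required in the statement.

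The main obstacle is the confinement estimate in dimension $d\geq 2$: hitting probabilities are no longer given by exact ratios of $\exp(V)$ as in dimension one, and one must exploit the precise radial structure of $V$ to reduce the capacity computation between $m_n$ and $\Ce_{M_n+1}$ to a one-dimensional exponential sum, up to multiplicative factors controlled by the boundedness of $|\delta|$ and the $O(k^{d-1})$ points per radial level. Once this is in hand, the reversibility identities and the $L^2$ concentration are routine adaptations of the one-dimensional arguments of \cite{GanPerShi}.
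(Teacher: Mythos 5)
Your overall strategy (confinement in $\B_{M_n}$, excursion decomposition, the identity $\sum_j\Rf^j_{m_n+l}=\pi(\Ce_{m_n+l})/\pi(\B_{M_n})$, a Chebyshev concentration step) is the right skeleton, but your choice of excursion base is different from the paper's and it is precisely there that the argument breaks down.

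You decompose into excursions from the \emph{single point} $m_n$ (i.e.\ some fixed $z_0\in\Ct_{m_n}$), so that the excursions are genuinely i.i.d.\ and $\Ea_{z_0}[\lo(y,T^+_{z_0})]=\pi(y)/\pi(z_0)$. The paper instead decomposes into excursions from the whole \emph{level set} $\Cm=\Ct_{m_n}^j$ on which $\pi$ is constant, using Lemma \ref{Lemmoy} to get $\sum_{z\in\Cm}\Ea_z[\lo(x,T^+_{\Cm})]=\pi(x)/\pi^j_{m_n}$. This is not a cosmetic choice. First, the second moment of a singleton excursion involves $1/\pa_y(T_{z_0}<T_y^+)$, and hitting a \emph{single} point of a $(\log n)^{2(d-1)}$-point hypercube surface from elsewhere on the surface is a much rarer event than hitting the set $\Cm$; the Dirichlet bounds of Lemma \ref{Lem4.6} control $\pa_y(T_y^+>T_{\Cm})$ by building a path to the surface, but they do not give a comparable lower bound on $\pa_y(T_{z_0}<T_y^+)$, which requires an additional argument you don't supply. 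Consequently your per-excursion variance is inflated by (at least) a factor of $|\Ct_{m_n}|$, and simultaneously the number of excursions drops by the same factor, so the trade-off is delicate and by no means ``routine''. Second, your statement that ``the relative variances of the excursion contributions are $O(1)$'' is false: already in dimension one (and in eq.\ \ref{2.12} here) the excursion variance grows like a power of $n$, namely $n^{1-\epsilon_n}$ up to $(\log n)$-factors, because $1/\pa_z(T_z^+>T^+_{\Cm})$ is of that order for $z$ near $\Ct_{M_n}$. Concentration holds only because there are $\Theta(n)$ excursions, not because the coefficient of variation is bounded.

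The price the paper pays for working with $\Cm$ rather than a singleton is that the excursions from a set are \emph{not} i.i.d., so a simple Chebyshev estimate does not apply. This is handled by Lemma \ref{lem3.3}, which gives an exact second-moment expansion for $\lo(\cdot,T^+_{\Cm,l})$ around $\frac{l}{|\Cm|}e_x$ including the cross terms $\underbar{E}_{v_i}\underbar{E}_v\,\pa_v(X_{T^+_{\Cm,i}}=v_i)$, together with Lemma \ref{A.14}, an exponential mixing estimate for the induced chain on $\Cm$, which shows these cross terms vanish after $|\Cm|^{1+\epsilon_1}$ steps. That combination — reversibility on a level set plus mixing of the induced chain — is the central technical idea of the proof, and it is absent from your sketch. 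If you want to run the singleton version you must at minimum (i) supply a lower bound on $\pa_y(T_{z_0}<T_y^+)$ uniformly in $y\in\B_{M_n}$ that is not much worse than $\exp(-\Delta_n)/|\B_{M_n}|$, and (ii) replace the $O(1)$-relative-variance claim with the correct polynomial estimate and check the resulting Chebyshev bound still tends to $0$; as written the argument has a gap at both places.
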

This proposition tells us that looking at the local time on the hypercubes $(\Ce_{m_n+l},l)$ is like looking at the weighted distinct values that can take the reversible measure in this set. The weight are given by the amount of points taken by the reversible measure on the  hypercube $(|\Ct_{m_n+l}^j|)$ and normalized by the "total mass" $(\sum_{x \in \B_{M_n}} \pi(x))$.   
\noindent Notice that $\Rf^j_{p}$ can be re-written in the following way 
\begin{eqnarray*}
\Rf^j_{p}= \frac{|\Ct_p^j|\pi_{p}^j}{\sum_{i=0}^{M_n} \sum_{j=1}^{n_d}|\Ct_i^j|\pi_{i}^j}.
\end{eqnarray*}
 In fact we can be more precise, and replace the denominator of $\Rf^j_{p}$ by $ \sum_{i=-(\log n)^{2-\epsilon}}^{(\log n)^{2-\epsilon}} \sum_{j=1}^{n_d} |\Ce^j_{m_n+i}| \tpi^j_{m_n+i}$, and then understand the real $p_j$ of the annealed result. Indeed, thanks to the fact that the $\delta_.$'s are i.i.d., $ |\Ce^j_{m_n+i}|/|\Ct_{m_n}| \rightarrow p_j, P_e.a.s.$ so the $p_j$ are just the proportion of a given reversible measure on an hypercube.
By changing the $A_n$ for an $A_n'$ with the same property as $A_n$ we can replace $\Rf^j_{p}$ by the following:
\begin{eqnarray}
\tilde{\Rf}^j_{p}= \frac{p_j\pi_{p}^j}{\sum_{i=-(\log n)^{2-\epsilon}}^{(\log n)^{2-\epsilon}} \sum_{j=1}^{n_d}p_j\pi_{i}^j}.
\end{eqnarray}
Notice also that we are looking to the walk on moving hypercubes: they are randomly  growing because their distance from the origin is $m_n$. Finally, the constants $(p_j, 1 \leq j \leq n_d)$ and vectors $(a_0^j,a_1^j,\cdots,a_{2d}^j, 1 \leq j \leq n_d)$ are preserved from an hypercube to an other, they purely depend on the random environment, especially the $\delta_.$'s.  \\
To the question: what can we learn about one weighted reversible measure from the local time ? The above proposition is not very useful, because it mixes different values of the reversible measure. However the above proposition is a corollary of  the following statement
\begin{eqnarray*}
\lim_{n \rightarrow + \infty}\inf_{V \in A_n}\p^V_{0}\left(\bigcap_{l \in \{-k, \cdots,k\}}\bigcap_{j=1}^{n_d} \left\{ \left|\frac{\lo(\Ce_{m_n+l}^j,n)}{n}-\Rf_{m_n+l}^j\right|\leq \delta \Rf_{m_n+l}^j \right\} \right)=1,
\end{eqnarray*}
also we can compare the strength of the different weighted $\pi$'s from the local time by the following:
\begin{eqnarray*}
\lim_{n \rightarrow + \infty}\inf_{V \in A_n}\p^V_{0}\left( \bigcap_{j=1}^{n_d} \bigcap_{i \neq j} \left\{ \left|\frac{\lo(\Ce_{m_n}^j,n)}{\lo(\Ce_{m_n}^i,n)}-\frac{p_j\pi_{m_n}^j}{p_i\pi_{m_n}^i}\right|\leq \delta  \right\} \right)=1.
\end{eqnarray*}





\subsection{Examples}

In this section we give a short analysis of three cases. The first one is a trivial case: we assume that the $\delta_{0_d}\equiv 0$, Theorem \ref{th1.1} get of course simpler, and can be seen as a one-dimensional case. The second example is the simplest we can build with random and non trivial $\delta$'s, we assume that they are Bernoulli trials with parameter $p$, in this case we easily get explicit formula for the $p_i$'s and the $\pi^j_i$'s. Our third case is the one-dimensional case, we just point out that it is not exactly Sinai's walk (\cite{sinai}, \cite{GanPerShi}), even if it pretty looks like it.

\subsubsection{The trivial case, $\delta \equiv 0$, $d \geq 2$}
In this case the reversible measure on $\Ct_k$ can only takes $d$ different values and by definition $\Ct_k$ are level sets for $V$, however $\Ct_k$ is not a level set for the reversible measure $\pi$. Indeed there is some edges exception: for example when d=2 $\pi$ takes two different values on $\Ct_k$: $\forall x \in \bar{\Ct}_k,\ \pi(x)=(2\exp(-S_k)+\exp(-S_{k+1})+\exp(-S_{k-1}))$, but $\forall x \in \Ct_k\setminus  \bar{\Ct}_k, \ \pi(x)=(2\exp(-S_k)+2\exp(-S_{k+1}))$. In general it is easy to check that $ \forall x \in \bar{\Ct}_k,\ \pi(x)=\pi(k,0_{d-1})$.
We can represent the level sets in the 3-dimensionnal case as in figure \ref{fig5}: surfaces delimited by the black lines represent the level sets for $V$ (on the left) and the level sets of $\pi$ (on the right). 
\begin{figure}[h]
\begin{center}
\input{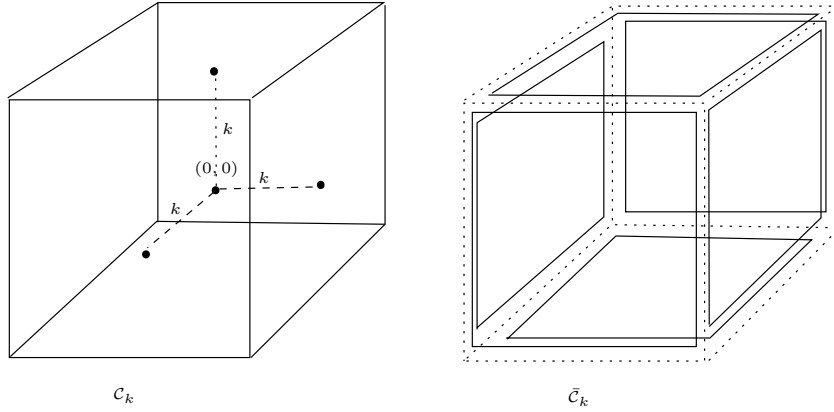} 
\caption{$d=3$, on the left level sets for $V$, on the right for $\pi$} \label{fig5}.
\end{center}
\end{figure} 

\begin{Rem} \label{1.1}
\noindent A few words on the size of the sets $\Ct_k$ and $\bar{\Ct}_k$, by size we mean the number of points in this set, we have $|\Ct_k|=k^{d}-(k-2)^{d}$, also we easily check that $|\Ct_k \smallsetminus \bar{\Ct}_k|=k^d-(k-2)^{d}-2d(k-2)^{d-1}$ therefore $|\bar{\Ct}_k|=2d(k-2)^{d-1}$. Finally for large $k$ we have $|\Ct_k| \thickapprox |\bar{\Ct}_k| \thickapprox  k^{d-1}$, and $|\Ct_k \smallsetminus \bar{\Ct}_k|  \thickapprox k^{d-2} $ . \\
\end{Rem}

Thanks to the preceding Remark there is only one value of the reversible measure  associated with a strictly positive probability $p_1$, which is of course equal to one. For example Theorem \ref{th1.1} is given by

\begin{Cor}  Assume hypothesis \ref{hyp1}-\ref{hyp3} are satisfied and $\delta_{0_d}=0$, then
\begin{eqnarray*}
\ \frac{\sup_{k\in \N}\lo(\Ct_k,n)}{n} \cvloi \sup_{l \in \Z} \bar{\Pi}(l), 
\end{eqnarray*}
where for all $l \in \Z$
\begin{eqnarray*}
\bar{\Pi}(l)=\frac{\exp(-1/2\bar{S}_l)\left[(2d-2)\exp(-1/2\bar{S}_l)+\exp(-1/2\bar{S}_{l-1})+\exp(-1/2\bar{S}_{l+1})\right]}{ \sum_{l=-\infty}^{+\infty}  \exp(-1/2\bar{S}_l)\left[(2d-2)\exp(-1/2\bar{S}_l)+\exp(-1/2\bar{S}_{l-1})+\exp(-1/2\bar{S}_{l+1})\right]}.
\end{eqnarray*}
\end{Cor}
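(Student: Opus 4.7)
The plan is to obtain the corollary as a direct specialization of Theorem \ref{th1.1} to the degenerate case $\delta_{0_d}\equiv 0$, the entire work being to identify the vectors $(a_0^j,\ldots,a_{2d}^j)$ and the weights $(p_j)$ that appear in (\ref{valpi}) and (\ref{1.14}).

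First I would observe that $\delta_{0_d}\equiv 0$ makes Hypothesis \ref{hyp4} automatic, so Theorem \ref{th1.1} applies. With this choice the state space is $\e^0=\{0\}$, hence $\e=\{-1,0,1\}$, and the potential simplifies to $V(x)=S_{\bar x}$, which is constant on every $\Ct_k$. Consequently $\pi(x)$ depends only on the number of neighbors of $x$ lying in $\Ct_{k-1}$, $\Ct_{k}$ and $\Ct_{k+1}$. For $x\in\bar{\Ct}_k$ exactly one coordinate has absolute value $k$, so precisely one neighbor lies in $\Ct_{k+1}$, one in $\Ct_{k-1}$, and the remaining $2d-2$ in $\Ct_k$; this gives the "face value"
$$
\pi(x)=\exp(-S_k/2)\bigl[(2d-2)\exp(-S_k/2)+\exp(-S_{k+1}/2)+\exp(-S_{k-1}/2)\bigr],
$$
corresponding to the vector $(a_0^1,a_1^1,\ldots,a_{2d}^1)=(0,1,-1,0,\ldots,0)$. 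Points in $\Ct_k\smallsetminus\bar{\Ct}_k$ (edges, corners, \ldots) have more than one coordinate of absolute value $k$ and therefore give finitely many other values with other vectors; call these indices $j=2,\ldots,n_d$.

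Next I would identify the weights $p_j$. Following the remark after the quenched proposition, $p_j=\lim_{k\to\infty}|\Ct_k^j|/|\Ct_k|$ (the $\delta$'s being trivially i.i.d.\ here). By Remark \ref{1.1}, $|\bar{\Ct}_k|/|\Ct_k|\to 1$ while $|\Ct_k\smallsetminus\bar{\Ct}_k|/|\Ct_k|\to 0$, so $p_1=1$ and $p_j=0$ for $j\geq 2$. Plugging $p_1=1$, $p_j=0$ for $j\geq 2$, together with the face vector above, into (\ref{1.14}) produces exactly the formula displayed in the corollary for $\bar{\Pi}(l)$, and Theorem \ref{th1.1} then yields the stated convergence of $\sup_{k\in\N}\lo(\Ct_k,n)/n$.

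The one point needing slight care is the justification that the edge/corner level sets may be discarded in passing from the quenched statement (which resolves every $\Ct_{m_n+l}^j$ separately) to the annealed formula: for each $j\geq 2$ one has $|\Ct_{m_n+l}^j|=O(m_n^{d-2})=o(|\bar{\Ct}_{m_n+l}|)$, so the corresponding $\Rf_{m_n+l}^j$ vanishes relative to the face term both in the numerator and denominator of the normalized local time, which is precisely what lets us set $p_j=0$ for $j\geq 2$ without losing mass. Given this observation, no further calculation beyond citing Theorem \ref{th1.1} is required.
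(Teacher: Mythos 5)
Your argument is correct and is essentially the route the paper itself intends: the paper proves the Corollary simply by invoking Remark \ref{1.1} to conclude that the face level set $\bar{\Ct}_k$ dominates $\Ct_k$ asymptotically, hence $p_1=1$ and all other $p_j=0$, and then specializes Theorem \ref{th1.1}. You have filled in the same steps (identifying $\e^0=\{0\}$, the face vector $(0,1,-1,0,\ldots,0)$, and the vanishing of the edge/corner contributions) with a bit more explicit justification, but there is no methodological difference.
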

When we look at the expression of $\bar{\Pi}(l)$, we understand that this case can be seen as a one dimension case: see Section \ref{dim1}.

\subsubsection{The Bernoulli case $\delta \thicksim B(p)$, $d \geq 2$}

This is the simplest case we can define with non trivial random $\delta$'s. In this paragraph we show that we can get a quite simple expression for $\bar{\Pi}$. Theorem \ref{th1.1} becomes:

\begin{Cor} \label{cor1.1} Assume hypothesis \ref{hyp1}-\ref{hyp3} are satisfied and that $\delta_{0_d}$ is a Bernoulli with parameter $p$, then
\begin{eqnarray*}
\ \frac{\sup_{k\in \N}\lo(\Ct_k,n)}{n} \cvloi \sup_{i \in \Z} \bar{\Pi}(i), 
\end{eqnarray*}
where for all $i \in \Z$
\begin{eqnarray*}
 \bar{\Pi}(i)&=&\frac{\Gamma_i}{{\sum_{l=-\infty}^{+ \infty} \Gamma_l}}, \\
\Gamma_i& =& \sum_{i_0=0}^1  \sum_{i_1=-1}^0 \sum_{i_2=1}^2 \sum_{k=0}^{2d-2} p^{B}(i_0,i_1,i_2,k) \pi^{B}(i,i_0,i_1,i_2,k), \\
 p^{B}(i_0,i_1,i_2,k)&=&p^{i_0+i_1+i_2}(1-p)^{3-i_0-i_1-i_2}\binom{2d-2}{k} p^{k}(1-p)^{2d-2-k}, \\
 \pi^B(i,i_0,i_1,i_2,k)&=&e^{-\bar{S}_{i+i_0}}\left(e^{-\bar{S}_{i+i_1}}+e^{-\bar{S}_{i+i_2}}+ ke^{-\bar{S}_{i+1}}+(2d-2-k)e^{-\bar{S}_{i}}\right).
\end{eqnarray*}
\end{Cor}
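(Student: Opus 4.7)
My plan is to specialize Theorem \ref{th1.1} by making the parameters $(p_j, 1 \le j \le n_d)$ and the shift vectors $(a_0^j, \ldots, a_{2d}^j)$ fully explicit under the Bernoulli$(p)$ assumption on $\delta$. The key simplification is that, since $\delta_{\cdot} \in \{0,1\}$, only finitely many shift configurations arise on $\bar{\Ct}_k$ and each carries an explicit binomial weight; no tool beyond Theorem \ref{th1.1} and elementary combinatorics is needed.

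I would first reduce the enumeration to a single ``face'' of $\bar{\Ct}_k$. By reflection symmetry across the coordinate hyperplanes, each of the $2d$ faces $\{x \cdot (\pm e_l) = k\}$ contributes identically to the distribution of $\pi(\cdot)$ on $\bar{\Ct}_k$, so I may focus on $\{x \cdot e_1 = k\}$. Remark \ref{1.1} gives $|\Ct_k \setminus \bar{\Ct}_k| = O(k^{d-2})$, negligible relative to $|\bar{\Ct}_k| \sim 2d\, k^{d-1}$, so the corner region contributes nothing to the asymptotic proportions $p_j$. On the chosen face, \ref{valpi} shows that $\pi(x)$ depends on the $\delta$'s only through the four integer parameters
$$i_0 := \delta_x \in \{0,1\}, \quad i_1 := \delta_{x-e_1} - 1 \in \{-1,0\}, \quad i_2 := \delta_{x+e_1} + 1 \in \{1,2\},$$
together with $k := \#\{(\epsilon, l) \in \{\pm 1\} \times \{2, \ldots, d\} : \delta_{x+\epsilon e_l} = 1\} \in \{0, \ldots, 2d-2\}$. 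A direct substitution (replacing $S$ by $\bar{S}$ as prescribed by Theorem \ref{th1.1}) recovers the expression $\pi^B(i, i_0, i_1, i_2, k)$, up to the overall factor $1/2$ in each exponent, which cancels when forming the ratio $\bar{\Pi}$.

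Next, independence and the Bernoulli$(p)$ law of the $\delta_{\cdot}$'s, together with a law of large numbers applied to the i.i.d.\ family $(\delta_y)_{y \in \bar{\Ct}_k \cap \{x \cdot e_1 = k\}}$, yield $P_2$-a.s.\ that the asymptotic fraction of points with configuration $(i_0, i_1, i_2, k)$ equals
$$p^{i_0}(1-p)^{1-i_0} \cdot p^{i_1+1}(1-p)^{-i_1} \cdot p^{i_2-1}(1-p)^{2-i_2} \cdot \binom{2d-2}{k} p^k (1-p)^{2d-2-k},$$
which collapses to $p^B(i_0, i_1, i_2, k)$. These are the explicit values of the $p_j$ in Theorem \ref{th1.1}, with the index $j$ replaced by the quadruple $(i_0, i_1, i_2, k)$. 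Substituting into $\bar{\Pi}(i) = \sum_j p_j \bar{\pi}^j_i / \sum_l \sum_j p_j \bar{\pi}^j_l$ and reindexing the $j$-sum produces $\bar{\Pi}(i) = \Gamma_i / \sum_l \Gamma_l$, which is the claim. The only genuine obstacle is the combinatorial bookkeeping of translating the Bernoulli indicators at each of the $2d+1$ relevant neighbors into the shift parameters $(i_0,i_1,i_2)$ and the count $k$, while checking that the contribution of $\Ct_k \setminus \bar{\Ct}_k$ vanishes; no new probabilistic input beyond Theorem \ref{th1.1} is required.
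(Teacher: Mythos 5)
Your approach is essentially the paper's: both reduce to a single face of $\bar{\Ct}_k$ via Remark \ref{1.1}, parametrize the distinct values of $\pi$ on that face by the Bernoulli indicators at the $2d+1$ relevant neighbors, and identify the $p_j$ as $P_2$-a.s.\ limiting proportions (the paper invokes Kolmogorov's zero--one law where you invoke the law of large numbers, but the content is the same). The paper organizes the enumeration as a tree diagram (Figure~\ref{fig5b}); your explicit change of variables $(i_0,i_1,i_2,k)$ with $i_0=\delta_x$, $i_1=\delta_{x-e_1}-1$, $i_2=\delta_{x+e_1}+1$ and binomial count $k$ is an equivalent (arguably cleaner) bookkeeping of the same decomposition, and your derivation of the weight $p^B(i_0,i_1,i_2,k)$ is correct.

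One claim in your write-up is, however, false: that the $1/2$ in the exponents of $\bar{\pi}^j_i$ from Theorem~\ref{th1.1} ``cancels when forming the ratio $\bar{\Pi}$.'' It does not. Writing $\bar{\Pi}(i)$ out, both numerator and denominator are sums of products $e^{-\frac{1}{2}\bar{S}_{\cdot+a_0^j}}\sum_l e^{-\frac{1}{2}\bar{S}_{\cdot+a_l^j}}$ taken at distinct shift indices, and no common factor can be extracted to eliminate the $\frac12$. Carrying the derivation through faithfully from \ref{valpi} and Theorem~\ref{th1.1}, one obtains $\pi^B$ with $e^{-\frac12\bar{S}_{\cdot}}$ throughout --- exactly as in the trivial-case corollary immediately preceding this one, which does retain the $1/2$. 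The displayed $\pi^B$ in the statement of Corollary~\ref{cor1.1} omits those factors and is therefore inconsistent with both Theorem~\ref{th1.1} and the trivial-case corollary; it appears to be a typographical slip in the paper, and the correct move is to flag it as such rather than to assert a cancellation that cannot occur.
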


Like for the trivial case, we are less interested in the value of $n_d$ (the number of distinct reversible measures we can obtained on a surface $\Ct_k$ for some $k$ large enough), than in $\tilde{n}_d$ which is the number of distinct reversible measure associated with a non-nul $p_i$. Observing the expression for $\Gamma_i$ we easily get that $\tilde{n}_d=8(2d-2)$. Notice that the $p_i$'s are replaced by the  $p^{B}(i_0,i_1,i_2,k)$, and of course we can check that $ \sum_{i_0=0}^1  \sum_{i_1=-1}^0 \sum_{i_2=1}^2 \sum_{k=0}^{2d-2} p^{B}(i_0,i_1,i_2,k)=1.$
If we assume for the moment that the general case is proved, we can get the above corollary, just by showing the
\begin{Lem} Assume $\delta_{0_d}$ is a Bernoulli with parameter $p$, then the only values, associated with non-nul  $p_i$'s, that can take the reversible measure $\pi$ on a $\Ct_k$ are given by the $\pi^B$, moreover the associated number $p_.$ are given by the $p^{B}$.
\end{Lem}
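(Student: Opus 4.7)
The plan is to invoke Theorem \ref{th1.1} (whose statement we may freely use) and reduce the lemma to an explicit enumeration of the possible $\pi$-values on a typical hypercube, together with their asymptotic proportions, under $\delta \sim \mathrm{Bernoulli}(p)$. Recall from the paragraph following the Proposition that $|\Ct^j_{m_n+i}|/|\Ct_{m_n}| \to p_j$, $P_e$-a.s., so the $p_j$ of the general theorem are \emph{defined} as limiting fractions. By Remark \ref{1.1}, $|\Ct_k \setminus \bar{\Ct}_k|/|\Ct_k| = O(1/k) \to 0$, hence the edge- and corner-points of the hypercube contribute a vanishing fraction, and any $\pi$-value supported only on them receives $p_j = 0$ in the limit (and is thus invisible in the corollary). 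Permutation symmetry among the $d$ axis directions then lets me fix $x \in \bar{\Ct}_k$ with $x \cdot e_1 = k$.

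For such an $x$, its $2d$ neighbors split into three groups: $x+e_1 \in \Ct_{k+1}$ contributes $\exp(-S_{k+1+\delta_{x+e_1}}/2)$, $x-e_1 \in \Ct_{k-1}$ contributes $\exp(-S_{k-1+\delta_{x-e_1}}/2)$, and each of the $2(d-1)$ lateral neighbors $x\pm e_j$ ($j \neq 1$) lies in $\Ct_k$ and contributes $\exp(-S_{k+\delta_\cdot}/2)$ with $\delta_\cdot \in \{0,1\}$. Setting $i_0 := \delta_x \in \{0,1\}$, $i_1 := \delta_{x-e_1} - 1 \in \{-1,0\}$, $i_2 := \delta_{x+e_1} + 1 \in \{1,2\}$, and $r := \#\{j \neq \pm 1 : \delta_{x + \mathrm{sign}(j)e_{|j|}} = 1\} \in \{0,\dots,2d-2\}$, the explicit expression for $\pi(x)$ from Section 1.2.2 reads exactly as $\pi^B(k,i_0,i_1,i_2,r)$. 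Since the $2d-2$ lateral neighbors play interchangeable roles, no further label beyond the count $r$ is needed.

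Next I compute the probabilities. The random variables $\delta_x$, $\delta_{x-e_1}$, $\delta_{x+e_1}$ and the $2d-2$ lateral $\delta$'s are independent $\mathrm{Bernoulli}(p)$, so the probability that $x$ realizes configuration $(i_0,i_1,i_2,r)$ factors as a product $p^{i_0}(1-p)^{1-i_0}\cdot p^{i_1+1}(1-p)^{-i_1}\cdot p^{i_2-1}(1-p)^{2-i_2}\cdot \binom{2d-2}{r}p^r(1-p)^{2d-2-r}$, and a direct rearrangement recognizes this as $p^B(i_0,i_1,i_2,r)$; summation over the four indices gives $1$, as the corollary notes. Combined with the cited $P_e$-a.s.\ convergence of $|\Ct^j_{m_n+i}|/|\Ct_{m_n}|$, this identifies each non-zero $p_j$ with one of the $p^B(i_0,i_1,i_2,r)$ and the corresponding $\pi^j$ with the matching $\pi^B$, which is the content of the lemma.

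The only mild obstacle is a short-range dependence: adjacent face-points share $\delta$'s, so the indicator events ``$\pi(x)$ realizes a given $\pi^B$'' are not independent across $x \in \bar{\Ct}_k$. However the range of dependence is at most one lattice step, so a standard second-moment estimate produces the almost-sure convergence of the proportions to their expectations---which is exactly the ergodic statement already invoked in the paper, and requires no new input here. All other steps are routine bookkeeping.
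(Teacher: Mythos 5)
Your argument is correct and follows essentially the same strategy as the paper: reduce to $\bar{\Ct}_k$ via Remark~\ref{1.1} (since edge/corner points contribute a vanishing fraction and so get $p_j=0$), fix a face direction by symmetry, classify the $2d$ neighbors into the three groups ($x\pm e_1$ and the $2d-2$ lateral sites), parametrize the configuration by $(i_0,i_1,i_2,r)$, and compute the Bernoulli probabilities. Where the paper's proof uses a tree/vector representation $v\in\{0,1\}\times\{1,2\}\times\{-1,0\}\times\{0,1\}^{2d-2}$ with a somewhat terse counting argument, you phrase the same enumeration through explicit shifted indices and a binomial count over the lateral neighbors; this is cleaner bookkeeping but not a conceptually different route. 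You also explicitly flag the short-range dependence between adjacent face points (shared $\delta$'s) and correctly observe it does not obstruct the a.s.\ convergence of the empirical proportions, a point the paper leaves implicit by appealing to the earlier convergence $|\Ct^j_{m_n+i}|/|\Ct_{m_n}|\to p_j$.
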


\begin{proof} The proof is pretty easy, however it may be used for more general case so we give some details.
First thanks to Remark \ref{1.1}, we only have to consider the number of distinct ${\pi}$ on $\bar{\Ct}_k$. 
Let us define a vector $v$ such that its coordinate $(v_1,v_2, \cdots,v_{2d+1}) \in \{0,1\} \times \{1,2\}\times \{-1,0\} \times \{0,1\}^{2d-2}$. Now assume that $x\in \bar{\Ct}_k$ with $x\cdot e_1=k$, we recall that 
 \begin{eqnarray*}
 & & \pi(x) \\ 
 &=& e^{(-1/2S_{\bar{x}+ \delta_x})} \left( e^{(-1/2S_{\bar{x}+1+ \delta_{x+e_1}})} +e^{(-1/2S_{\bar{x}-1+ \delta_{x-e_1}})}+\sum_{i=-d,i \notin\{ 0,1-1\}}^d e^{(-1/2S_{\bar{x}+ \delta_{x+sign(i)e_i}})}\right)
 \end{eqnarray*}
So the vector $v$ represents one configuration for $\pi(x)$, example $(v=(0,2,-1,1,0_{2d-2}))$ means that $\delta_x=0,\delta_{x+e_1}=1,\delta_{x-e_1}=0, \cdots$. 
To get the result we can use, for example, a tree representation (see also Figure \ref{fig5b}): the root is given by the vector $v_0=(0\ or\ 1,-1,0_{2d-2})$, the first coordinate of the vector $v$ having no importance. For the first generation we fix or increment the second and third coordinate (in Figure \ref{fig5b} we underlined the fixed coordinates). Once all the second and third coordinate are fixed, each of these vectors give $2^d$ distinct vectors with fixed coordinate. Each of this last vectors leads to $2d-2$ distinct reversible measures and we are done. $\tilde{n}_d$ is given by 2 $(0\ or\ 1)$ times $4$ times the number of this vectors. We can read the expression of $\pi^B$ and the probability $p^{B}$ from the tree.  
\begin{figure}[h]
\input{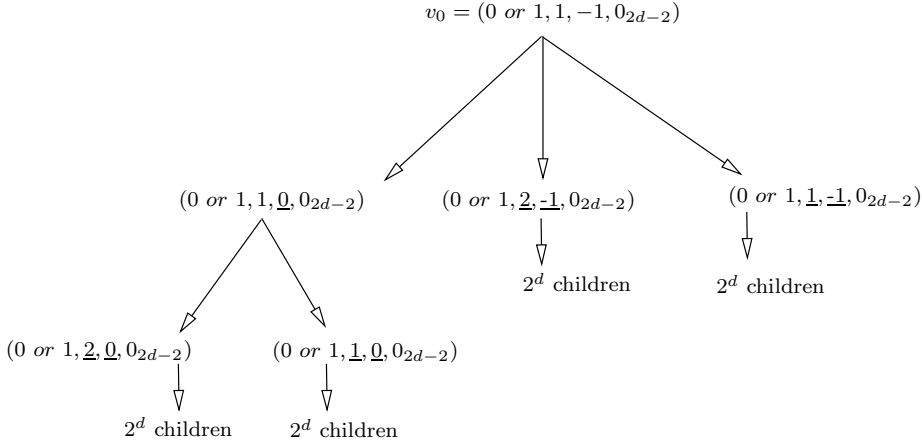} 
\caption{Tree representation for the Bernoulli case.} \label{fig5b}
\end{figure}
\end{proof}



\subsubsection{About dimension one \label{dim1} (d=1)}

With the definition of the RWRE we give we do not get exactly Sinai's random walk, even if we assume $\delta_{0_d} \equiv 0$. First because the probability of transition are not the same and also because in our case the random environment is almost symmetric with respect to the origin. Almost because if $\delta_{0_d} \equiv 0$ the random variables $\delta_{0_d}$ differs at each site. In \cite{Pierre8}, it is shown that we do not have necessarily a localization in one point but in one or two points each with a positive probability. Here,  by definition, we have $\Ct_k=\{-k,k\}$ so unlike the d-dimensional with $d>1$, the size of $\Ct_k$ does not grow with $k$. To compare,
with the result in \cite{GanPerShi} we assume that the walk is reflected at the origin so $\Ct_k=\{k\}$ first we recall
\begin{The} (\cite{GanPerShi}) Assume that $\lo_S$ is the local time of Sinai's walk (reflected at $0$), then 
\begin{eqnarray*}
\ \frac{\sup_{l\in \N}\lo_S(l,n)}{n} \cvloi \sup_{i \in \Z}\frac{e^{-\bar{S}_{i-1}}+e^{-\bar{S}_{i}}}{\sum_{k=-\infty}^{+ \infty}e^{-\bar{S}_{k-1}}+e^{-\bar{S}_{k}}}.
\end{eqnarray*}
\end{The}
In our case, we can say something like
\begin{Cor} Assume $d=1$, hypothesis \ref{hyp1}-\ref{hyp4} are satisfied then 
\begin{eqnarray*}
 \frac{\sup_{l\in \N} \lo(l,n)}{n} &\cvloi&  \sup_{i \in \Z} \bar{\Pi}(i), \textrm{ where} \\
\bar{\Pi}(i) &=&\frac{e^{-1/2\bar{S}_{i+ \delta_i}} \left(e^{-1/2 \bar{S}_{i-1+\delta_{i-1}}}+e^{-1/2 \bar{S}_{i+1+\delta_{i+1}}}\right)}{{\sum_{k=-\infty}^{+ \infty} e^{-1/2\bar{S}_{k+\delta_{k}}} \left(e^{-1/2 \bar{S}_{k-1+\delta_{k-1}}}+e^{-1/2 \bar{S}_{k+1+\delta_{k+1}}}\right) }}. \\
\end{eqnarray*}
\end{Cor}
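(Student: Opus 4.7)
The plan is to specialize the proof of Theorem \ref{th1.1} (and the preceding quenched Proposition) to $d=1$. The simplification is that $\Ct_k=\{-k,k\}$ has bounded size, so each level set $\Ct_k^j$ reduces to a single site and the weights $|\Ct_k^j|$ disappear from the formulas. In particular the quenched Proposition, restricted to $d=1$, asserts
$$\lo(m_n+l,n)/n \;\approx\; \pi(m_n+l)\Big/\!\!\sum_{x\in \B_{M_n}}\!\!\pi(x)$$
uniformly for $|l|\le k$, on a set $A_n$ of environments of $P_e$-probability tending to one. I would first verify this pointwise localization estimate by running the same moment-of-excursion/Dirichlet (electrical network) argument used in higher dimensions: it only relies on the one-dimensional valley decomposition already available in \cite{Pierre8}, and there is no combinatorial overhead coming from the geometry of $\Ct_k$.

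Next, I would reduce the normalizing denominator to a sum over a window $\{m_n+i : |i|\le (\log n)^{2-\epsilon}\}$. Outside this window, typical increments of $S$ grow faster than the bounded $\delta_.$ perturbations, so the tail contribution to $\sum_x\pi(x)$ is negligible. This is exactly the same truncation argument outlined in the author's remark on replacing $\mathcal R^j_p$ by $\tilde{\mathcal R}^j_p$ after Proposition, specialized to $d=1$.

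Then I would pass to the limit using the Golosov--Bertoin result recalled in Section \ref{parSp}: the centered process $(S_{m_n+i}-S_{m_n},\, i\in\Z)$ converges in law to the two-sided $\bar S$ conditioned to stay $\ge 0$ for $i\ge 0$ and $>0$ for $i<0$. Because the i.i.d.\ field $(\delta_x)_{x\in\Z}$ is independent of $(S_k)$ and shift-stationary, the independent ``tag" $(\delta_{m_n+i},\delta_{m_n+i\pm 1})$ converges jointly with the $S$-increments to $(\delta_i,\delta_{i\pm 1})$. Writing, for $l>0$ in the window so that $|m_n+l|=m_n+l$,
$$\frac{\pi(m_n+l)}{\exp(-S_{m_n})} \;=\; e^{-\frac12(S_{m_n+l+\delta_{m_n+l}}-S_{m_n})}\Bigl(e^{-\frac12(S_{m_n+l-1+\delta_{m_n+l-1}}-S_{m_n})}+e^{-\frac12(S_{m_n+l+1+\delta_{m_n+l+1}}-S_{m_n})}\Bigr),$$
the continuous mapping theorem gives convergence of $\lo(l,n)/n$ as a process in $l$ to $\bar\Pi(\cdot)$, hence of the supremum.

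The main obstacle is the almost-symmetric structure of the one-dimensional environment noted by the author: since $V(x)=S_{|x|+\delta_x}$ is roughly a reflection of the same one-sided $S$-trajectory, a valley at depth $m_n$ is mirrored at $-m_n$ and localization may occur at one or two points (as recalled from \cite{Pierre8}). I would handle this by extending the window to cover both mirrored valleys simultaneously and checking that the joint limiting law of $\pi$ values on the two sides reorganizes as a single $\bar\Pi(i)$ indexed by $i\in\Z$, exploiting that the $\delta_x$ for $x$ on opposite sides of $0$ are independent copies, so that adding a reflected valley only amounts to re-indexing the same conditioned process $\bar S$; the supremum then only sees one realization and the stated limit is obtained.
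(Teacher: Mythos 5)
Your first three paragraphs are a reasonable sketch of how to run the quenched moment-of-excursion argument and then pass to the Golosov--Bertoin limit in dimension one, and this is indeed in the spirit of the proof the paper defers to (it simply writes ``mimic the proof in \cite{GanPerShi}'' and gives no details). The gap is in your final paragraph, where you try to prove the Corollary \emph{without} the reflection assumption. Read the surrounding text again: the paper sets up the comparison with \cite{GanPerShi} under the hypothesis ``we assume that the walk is reflected at the origin so $\Ct_k=\{k\}$'', and it is under that convention that the Corollary is stated --- $\lo(l,n)$ with $l\in\N$ is the local time of a walk living on $\N$, with one valley at $m_n$.

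If you drop the reflection, the argument you outline breaks. The quenched normalization is $\sum_{x\in\B_{M_n}}\pi(x)$, and in one dimension $\B_{M_n}=\{-M_n,\dots,M_n\}$, so the denominator receives contributions from \emph{both} mirrored valleys. After centering at $m_n$ and dividing out $e^{-S_{m_n}/2}$, the two sides carry the same limiting conditioned path $\bar S$ but \emph{independent} $\delta$-fields, so the limit of $\sum_{x\in\B_{M_n}}\pi(x)\,e^{S_{m_n}}$ is of the form $\sum_{l}\bigl(\bar\pi^{(\delta^+)}_l+\bar\pi^{(\delta^-)}_l\bigr)$ with $\delta^{\pm}$ two independent copies, not the single one-sided sum $\sum_l \bar\pi_l$ appearing in $\bar\Pi$. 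Your claim that ``the supremum then only sees one realization'' misses that the denominator still sees both realizations; the limit you would obtain is $\sup_i \bar\pi^{(\delta^+)}_i / \sum_l(\bar\pi^{(\delta^+)}_l+\bar\pi^{(\delta^-)}_l)$, which is strictly smaller in law than the stated $\sup_i\bar\Pi(i)$. Moreover, as the author notes (citing \cite{Pierre8}), in the unreflected case the walk is localized in one \emph{or} two points each with positive probability, so there is not even a single deterministic factor to correct by: the limit law genuinely changes. The clean fix is simply to keep the reflection convention, in which case your paragraph four is unnecessary and the first three paragraphs (with the small typo $\lo(l,n)$ in place of $\lo(m_n+l,n)$ in the continuous-mapping step corrected) give the intended sketch.
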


To compare with the result of \cite{GanPerShi}, just take $\delta_{0_d}=0$ and get
\begin{eqnarray*}
\bar{\Pi}(i)=\frac{e^{-1/2\bar{S}_{i}} \left(e^{-1/2 \bar{S}_{i-1}}+e^{-1/2 \bar{S}_{i+1}}\right)}{{\sum_{k=-\infty}^{+ \infty} e^{-1/2\bar{S}_{k}} \left(e^{-1/2 \bar{S}_{k-1}}+e^{-1/2 \bar{S}_{k+1}}\right) }}. \\
\end{eqnarray*}

Notice that the one dimensional case do not behave like the multi-dimensional one, in particular we see that the $\delta$ appears explicitly when $d=1$. This just comes from the fact that the $\Ct_k$ are reduced to one point. \\
We will not give any details of the proof of this result, indeed we can get it just by mimic the proof in \cite{GanPerShi}.

\subsection{Ending remarks}

In \cite{GanPerShi} the result in law (equivalent of our Theorem) leads quite easily to the almost sure behavior of the $\limsup$ of the supremum of the local time. Here, except in some simple cases including the preceeding examples, some more work are needed that the reason why we treat this aspect in a seperate paper (see \cite{Pierre9}).
\\
The assumption \ref{hyp4}, about the $\delta$'s, allows us to keep the structure of valleys, things are different if the support of the $\delta$'s  depends on $x\in \Z^d$. For example for $\delta$'s not i.d. and with a support depending on $\bar{x}$ it is easy to construct an environment without valleys. However for a small support comparing to a typical fluctuation of the walk we believe that we can get a quite similar behavior. \\
Also we choose the infinite norm for the definition of the random environment which naturally leads to our hypercubes, however for other norms one can get similar results.





\noindent \\ The rest of the paper is devoted to the proof of the results.

\section{Proof of the results}

In all this section we assume that $d\geq 2$. 
We start with some notions and results for the random potential $V$,

\subsection{Some properties of the random potential}

We need some basic definitions and elementary properties on the random environment, we start with definition well known for the one dimensionnal case: 
\\
\noindent
Let $n\in \N$, we define 
\begin{eqnarray*}
\Delta_n = \left\{ \max_{ 0 \leq k \leq l \leq m_n } \max_{x \in \Ct_k}\max_{y \in \Ct_l}\left( V(y)- V(x) \right) \right\} \vee \left\{ \max_{ m_n \leq k \leq l \leq M_n } \max_{x \in \Ct_k}\max_{y \in \Ct_l}\left( V(x)- V(y) \right) \right\}
\end{eqnarray*}
In words $\Delta_n$ is the maximum between the highest height of potential the walk can encountered, when going from $0_d$ to $\Ct_{m_n}$ and the one when going from $\Ct_{M_n}$ to $\Ct_{m_n}$.  

We continue with the


\begin{Rem} \label{rem2.3}
thanks to hypothesis \ref{hyp3} and \ref{hyp4} ( the $\delta's$ and the increments of $S$ are bounded) we have, for all $m>0$  
\begin{eqnarray*}
S_{m} - \ct \leq S_{m+\delta_.} \leq S_{m}+\ct.
\end{eqnarray*}
\end{Rem}

\noindent We are now ready to state a lemma for the random environment $V$:
\begin{Lem} \label{lemenv}  Assume \ref{hyp1}-\ref{hyp4} are satisfied, let $n \in \N$, and  $A_{n}\in \f_{e}$ be such that $\forall \omega \in A_n$, $S(\omega)$ and $V(\omega)$ satisfy the following 3 properties, \\
1. For all $x \in \Ct_{m_n}$ and all $y \in \Ct_{M_n}$ $\left| V(y)-V(x) -\log n- (\log n)^{1/2} \right| \leq  \ct  $, \\
2. Let $\epsilon>0$, $ (\log n)^{2- \epsilon} \leq m_n \leq (\log n)^{2+ \epsilon}$, $(\log n)^{2-\epsilon} \leq M_n \leq (\log n)^{2+\epsilon}$,  $M_n/m_n \leq (\log n)^{\epsilon}$,\\
3. $\Delta_n Ê\leq \log n(1-\epsilon_n)$, with $\epsilon_n=\frac{(\log \log n)^2}{\log n}$, \\
then $\lim_{n \rightarrow + \infty}P_e(A_n)=1$. 
\end{Lem}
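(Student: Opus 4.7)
The plan is to reduce every statement about the multi-dimensional potential $V$ to a statement about the one-dimensional random walk $S$, and then to invoke classical estimates for Sinai's potential. The key reduction is Remark \ref{rem2.3}: since $V(x) = S_{\bar{x}+\delta_x}$ and $|\delta_\cdot|$ is bounded $P_2$-a.s. while the increments of $S$ are bounded $P_1$-a.s., we can replace each $V(z)$ by $S_{\bar{z}}$ at the cost of a constant that is absorbed into $\ct$. Consequently all three properties will follow from suitable events measurable with respect to $(S_k,k\in\Z)$, on which the classical theory of Sinai valleys applies.

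For property $1$, I would write $V(y)-V(x) = S_{M_n+\delta_y}-S_{m_n+\delta_x}$; Remark \ref{rem2.3} gives $|V(y)-V(x) - (S_{M_n}-S_{m_n})|\leq \ct$. By definition of $M_n$ and $m_n$, the increment $S_{M_n}-S_{m_n}$ is at least $\log n+(\log n)^{1/2}$, and the overshoot is at most $|\eta_{M_n}|$ which is bounded by \eqref{hyp3}. Combining these two bounds yields property $1$. For property $2$, the bounds on $m_n$ and $M_n$ are exactly the classical localisation bounds for the bottom and top of the Sinai valley at scale $\log n$: invariance principle (Donsker) applied to $S_{\lfloor (\log n)^2 t\rfloor}/\log n$ identifies $m_n/(\log n)^2$ and $M_n/(\log n)^2$ with the location of the bottom/top of a Brownian valley of height $1$, and moderate-deviation control of $S$ gives the polynomial safety margin $(\log n)^{\pm\epsilon}$. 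The ratio bound $M_n/m_n\leq (\log n)^\epsilon$ is a consequence of the same joint convergence since the limit pair has no atoms at $0$ or $\infty$.

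The main obstacle is property $3$, because $\log n(1-\epsilon_n)$ is a strictly smaller threshold than the defining threshold $\log n+(\log n)^{1/2}$ of $M_n$, so it is not purely formal. After the reduction via Remark \ref{rem2.3}, $\Delta_n$ is bounded by
\begin{equation*}
\max_{0\leq k\leq l\leq m_n}(S_l-S_k) \; \vee \; \max_{m_n\leq k\leq l\leq M_n}(S_k-S_l) + \ct.
\end{equation*}
Both of these are maxima of upward excursions inside the valley delimited by $m_n$ and $M_n$. By the very definition of $M_n$ as the first time the left-to-right running maximum minus minimum reaches $\log n+(\log n)^{1/2}$, this upward excursion cannot exceed that threshold; to get the sharper bound $\log n(1-\epsilon_n)$, I would use a moderate deviation argument showing that the probability that the maximum upcrossing in $[0,m_n]$ exceeds $\log n - (\log\log n)^2$ is negligible: this is a standard ballot-type estimate on the rescaled Brownian motion saying that the bottom of a valley of height $1$ is separated from the side maxima by a positive margin with probability one.

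Finally, intersecting the three events $A_n^{(1)},A_n^{(2)},A_n^{(3)}$ obtained above and setting $A_n=A_n^{(1)}\cap A_n^{(2)}\cap A_n^{(3)}$, the union bound gives $P_e(A_n)\to 1$, which completes the proof.
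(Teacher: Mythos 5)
Your reduction via Remark \ref{rem2.3} from the $d$-dimensional potential $V$ to the one-dimensional walk $S$, at the cost of a bounded constant absorbed into $\ct$, followed by the standard one-dimensional Sinai-valley estimates for $m_n$, $M_n$ and $\Delta_n$, is precisely the route the paper takes; the paper is simply terser, dispatching the one-dimensional part with a citation to \cite{Pierre2}. Your sketch fills in that citation, and the only point worth making explicit is that for property~3 the passage from the a.s.\ limit statement to the vanishing sequence $\epsilon_n=(\log\log n)^2/\log n$ uses that convergence of the distribution functions is uniform (Polya) since the limit law of $\Delta_n/\log n$ is continuous.
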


The proof is very basic, thanks to the above Remark we move from $V$ to $S$ easily, we are left to prove a one-dimensionnal result that can be found in the literature (see for example \cite{Pierre2} page 1389). Notice that Lemma \ref{lemenv} takes only into account the basic properties of the environment we need, some more advance statements are needed especially to obtain the annealed result. We present and show them Section 2.3. As usual, we write $V \in A_n$ to say that we are looking at an event $\omega \in \f_e$ such that $V(\omega)$ satisfies the three conditions.

\subsection{Proof of Proposition \ref{th1.3} (quenched results)}

We start with some basic statements, one saying that the walk is trapped in a specific valley, the other one that the walk reaches a level set $\Cm$ on a negligible amount of time comparing to $n$. In all this section we assume that $V \in A_n$.



\begin{Lem} Let $1 \leq j \leq n_d$, we have
\begin{eqnarray} 
\lim_{n \rightarrow \infty} \pa_0\left(T_{\Cm} \leq n^{1-\epsilon_n/2} \right)=1, \label{2.2}
\end{eqnarray}
for all $z \in \Ct_{m_n}$
\begin{eqnarray} 
 \pa_z(T_{\bar{\Bt}_{M_n}} > n ) \geq 1-(\log n)^{2(d-1)}n^{-(\log n)^{-1/2}}. \label{2.3}  
\end{eqnarray}
\end{Lem}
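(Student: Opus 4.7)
The plan for both estimates is to exploit the electrical-network interpretation of the reversible chain $X$ together with the environmental control provided by Lemma \ref{lemenv}. Throughout, I would work with a fixed $V \in A_n$, and without loss of generality shift $V$ additively so that $\min_{x \in \Bt_{M_n}} V(x)=0$; this is harmless since the law of $X$ is invariant under such a shift.

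For (\ref{2.2}), the plan is to control the quenched mean hitting time $\Ea_0[T_{\Cm}]$ and then apply Markov's inequality. By the commute-time identity for reversible chains,
\[
\Ea_0[T_{\Cm}] \leq 2\,\Bigl(\sum_{(x,y)}\pi(x,y)\Bigr)\cdot R^V_{\textrm{eff}}(0_d\leftrightarrow \Cm),
\]
where the edge sum is over $\Bt_{M_n}$. I would upper bound $R^V_{\textrm{eff}}$ by the resistance of a single nearest-neighbor path from $0_d$ to any point of $\Cm$: such a path has length at most $d\cdot m_n$, and by Remark \ref{rem2.3} together with the definition of $\Delta_n$ each edge contributes a resistance bounded by $\ct\,e^{\Delta_n}$. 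Likewise $\sum_{(x,y)}\pi(x,y)\leq \ct\,|\Bt_{M_n}|\leq \ct\,(\log n)^{d(2+\epsilon)}$ by property 2. Invoking property 3, namely $\Delta_n\leq \log n(1-\epsilon_n)$, this gives
\[
\Ea_0[T_{\Cm}]\leq \ct\,(\log n)^{d(2+\epsilon)+2}\cdot n\cdot e^{-(\log\log n)^2}.
\]
Markov's inequality at the threshold $n^{1-\epsilon_n/2}$, combined with the identity $n^{\epsilon_n/2}=\exp((\log\log n)^2/2)$, then yields $\pao(T_{\Cm}>n^{1-\epsilon_n/2})\leq \ct\,(\log n)^{d(2+\epsilon)+2}\,e^{-(\log\log n)^2/2}\to 0$.

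For (\ref{2.3}), fix $z\in \Ct_{m_n}$ and invoke the standard identity $\pa_z(T_{\bar{\Bt}_{M_n}}<T^+_z)=C^V_{\textrm{eff}}(z\leftrightarrow \bar{\Bt}_{M_n})/\pi(z)$. I would bound the effective conductance by the total conductance of the cutset of edges joining $\Ct_{M_n}$ to $\Ct_{M_n+1}$, which is the natural outer cut separating $z$ from $\bar{\Bt}_{M_n}$. By property 1 of $A_n$ and Remark \ref{rem2.3}, every endpoint of such a cut-edge has potential at least $\log n+(\log n)^{1/2}-\ct$, so each cut-edge has conductance at most $\ct\,n^{-1}e^{-(\log n)^{1/2}}$; by property 2 together with Remark \ref{1.1}, the cutset has cardinality at most $\ct\,(\log n)^{2(d-1)}$. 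On the other hand $V(z)=\er(1)$ (bottom of the valley) gives $\pi(z)\geq \ct>0$, so
\[
\pa_z\bigl(T_{\bar{\Bt}_{M_n}}<T^+_z\bigr)\leq \ct\,(\log n)^{2(d-1)}\,n^{-1}\,e^{-(\log n)^{1/2}}.
\]
By the strong Markov property, $X$ admits at most $n$ excursions away from $z$ within time $n$, so a union bound together with the identity $e^{-(\log n)^{1/2}}=n^{-(\log n)^{-1/2}}$ yields
\[
\pa_z\bigl(T_{\bar{\Bt}_{M_n}}\leq n\bigr)\leq n\cdot \pa_z\bigl(T_{\bar{\Bt}_{M_n}}<T^+_z\bigr)\leq \ct\,(\log n)^{2(d-1)}\,n^{-(\log n)^{-1/2}}.
\]

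The main technical point is to make sure that the numerous polynomial-in-$\log n$ losses (path length, cutset size, total $\pi$-mass, count of return excursions) do not absorb the critical subexponential factors $e^{-(\log\log n)^2}$ and $e^{-(\log n)^{1/2}}$ that Lemma \ref{lemenv} extracts from membership in $A_n$; this succeeds because both factors beat any $(\log n)^{O(d)}$. A secondary issue is the additive normalization of $V$, needed to make both the edge resistances (for (\ref{2.2})) and the capacity $\pi(z)$ (for (\ref{2.3})) of a matching order; the shift above handles this simultaneously.
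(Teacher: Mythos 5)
Your argument for (\ref{2.3}) is essentially the one in the paper: the identity $\pa_z(T_{\bar{\Bt}_{M_n}}<T^+_z)=C_{\textrm{eff}}/\pi(z)$ together with a cut at $\Ct_{M_n}$ is mathematically the same as the paper's Dirichlet test function $h_{M_n}=\un_{\Bt_{M_n}}$ in Lemma \ref{Lem4.6}, and your union bound over at most $n$ excursions from $z$ plays the same role as the paper's $(1-\max_z\pa_z(T_{\bar{\Bt}_{M_n}}<T^+_z))^n\geq 1-n\max_z(\cdot)$ argument (the paper counts returns to $\Ct_{m_n}$ rather than to the single point $z$, but this is an inessential difference). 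So this half is fine.

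For (\ref{2.2}) there is a genuine gap. You invoke the commute-time identity ``restricted to $\Bt_{M_n}$'' to bound $\Ea_0[T_{\Cm}]$, but that identity holds only on a finite network, and the walk on $\Z^d$ is not confined to $\Bt_{M_n}$ before hitting $\Cm$. Because $\Cm=\Ct_{m_n}^j$ is a \emph{proper} subset of the full sphere $\Ct_{m_n}$, the walk can cross $\Ct_{m_n}$ at a point outside $\Cm$, leave $\Bt_{m_n}$, and even leave $\Bt_{M_n}$, before it ever touches $\Cm$; on that event the environment outside $\Bt_{M_n}$ is uncontrolled by $A_n$, and $\Ea_0[T_{\Cm}]$ need not be bounded by the $\Bt_{M_n}$-restricted commute time (it could even be much larger). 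The paper avoids this by a two-step decomposition: first it bounds $T_{\Ct_{m_n}}$ (the hitting time of the \emph{whole} sphere, for which the walk genuinely stays inside $\Bt_{m_n}$) via the Green's-function sum $\sum_{x\in\Bt_{m_n}}\Ea(\lo(x,T_{\Ct_{m_n}}))\leq\sum_x 1/\pa_x(T_x^+>T_{\Ct_{m_n}})$ and \ref{4.19}, and then separately shows in (\ref{2.4bb}) that from a point $z\in\Ct_{m_n}$ the walk reaches $\Cm$ within $n^{1-\epsilon}$ steps with probability $\to1$, reusing the \ref{2.3}-type escape estimate. Your argument would be repaired either by splitting into the same two steps (commute time on $\Bt_{m_n}$ for $T_{\Ct_{m_n}}$, then a mixing/escape argument for $\Ct_{m_n}\to\Cm$), or by working with $T_{\Cm}\wedge T_{\bar{\Bt}_{M_n}}$ (shorting both $\Cm$ and the outer boundary) and adding the escape probability from (\ref{2.3}); as written, the step is not valid.
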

\begin{Pre}
The proof is based on the estimation of certain probability involving hitting times, we recall what we need at the end of the Appendix.  \\
\textit{Proof of \ref{2.3}} we use the classical way in that situation: we show that the walk prefers to return $n$ times to $\Ct_{m_n}$ instead of reaching $\bar{\Bt}_{M_n}$ the complementary of $\Bt_{M_n}$ in $\Z^d$ (notice also that $T_{\bar{\Bt}_{M_n}}=T_{\Ct_{M_n+1}}$). Let us define the following  stopping times, let $p\geq 1$, $A \subset \Z^d$,
\begin{eqnarray*}
&& T_{A,p } \equiv \left\{\begin{array}{l}  \inf\{k>T_{A,p-1},\ X_k\in A \}, \\
 + \infty \textrm{, if such }  k \textrm{ does not exist.}
 \end{array} \right. \\
& & T_{A} \equiv T_{A,1};\ T_{A,0}=0. 
\end{eqnarray*}
For simplicity, we also denote $T_{A}^+$ the first return to $A$ starting from $A$.
Let $y \in \Ct_{m_n}$, the strong Markov property yields:
\begin{eqnarray*}
\pa_y(T_{\bar{\Bt}_{M_n}}>T_{\Ct_{m_n},n})  \geq  \left( \min _{z \in \Ct_{m_n}} \pa_z\left(T_{\bar{\Bt}_{M_n}}>T^+_{\Ct_{m_n}}\right) \right)^n \geq  \left( 1- \max _{z \in \Ct_{m_n}} \pa_z\left(T_{\bar{\Bt}_{M_n}}>T^+_{z}\right) \right)^n. 
\end{eqnarray*}
 \ref{4.17} together with item 1. of Lemma \ref{lemenv} yields $\pa_z\left(T^+_{z}<T_{\bar{\Ct}_{M_n}}\right) \leq \ct |\B_{M_n}|\exp(-\log n-(\log n)^{1/2})$. So we get \ref{2.3} thanks to the facts $|\Bt_{m_n}|=(2m_n+1)^d$, $T_{\Ct_{m_n},n} \geq n$, and item 2 of Lemma \ref{lemenv}.
\\
\textit{Proof of \ref{2.2}} we do that in two steps, \textit{first} we show that  
\begin{eqnarray} 
\lim_{n \rightarrow \infty} \pa_0(T_{\Ct_{m_n}} \leq n^{1-\epsilon_n/2} )=1, \label{2.4b}
\end{eqnarray}
We have $T_{\Ct_{m_n}}=\lo(\Bt_{m_n},T_{\Ct_{m_n}})$, 
Markov inequality implies $\pa(\lo(\Bt_{m_n},T_{\Ct_{m_n}}) \geq n^{1-\epsilon_n} ) \leq n^{-1+\epsilon_n}$ $ \sum_{x \in \Bt_{m_n}} \Ea(\lo(x,T_{\Ct_{m_n}})) $ moreover $\Ea(\lo(x,T_{\Ct_{m_n}}))=\pa(T_x<T_{\Ct_{m_n}})/\pa_x(T_x^+>T_{\Ct_{m_n}})\leq 1/\pa_x(T_x^+>T_{\Ct_{m_n}})$. 
\noindent Thanks to \ref{4.19}, for all $x\in \Bt_{m_n}$, $\pa_x(T_x^+>T_{\Ct_{m_n}}) \geq  \ct\exp(-\Delta_n) |\Bt_{m_n}|^{-1} $, finally  thanks to item 2 and 3 of Lemma \ref{lemenv} we get 
\begin{eqnarray*}
\pao(\lo(\Bt_{m_n},T_{\Ct_{m_n}}) \geq n^{1-\epsilon_n} ) \leq \ct \frac{(\log n)^{2d(2+\epsilon)}}{\exp((\log \log n)^2 /2)},
\end{eqnarray*}
with $\epsilon>0$ and we \ref{2.4b} comes. To get \ref{2.2} from \ref{2.4b} we only have to show that for all $\epsilon>0$ for any $j: 1\leq j \leq n_d$ and $z\in \Ct_{m_n}$   
\begin{eqnarray} 
\lim_{n \rightarrow + \infty} \pa_z(T_{\Ct_{m_n}^j} \leq n^{1-\epsilon} )= 1. \label{2.4bb}
\end{eqnarray}
There is nothing to do if $z\in \Cm$, if not we get it with a similar argument used for \ref{2.3}.  \\
\end{Pre} \\



Our first key result is the following d-dimensional equivalent of Theorems 3.8 and 3.14 in \cite{Pierre2} or equation 2.10 in \cite{GanPerShi},  

\begin{Pro} Let $\epsilon>0$. For all $1 \leq j \leq n_d$,
\begin{eqnarray}
\lim_{n \rightarrow + \infty} \sum_{y_0\in \Cm}\p^V_{y_0}\left(\left|\frac{\lo(\Cm,n)}{n}-\Rf_{m_n}^j\right|\geq \epsilon \Rf_{m_n}^j \right)=0, \label{2.6}
\end{eqnarray}
where 
\begin{eqnarray*}
\Rf^j_{p}= \frac{|\Ct_p^j|\pi_{p}^j}{ \sum_{x \in \B_{M_n}} \pi(x)},
\end{eqnarray*}
moreover
\begin{eqnarray}
\lim_{n \rightarrow + \infty} \p^V_{0}\left(\left|\frac{\lo(\Ct_{m_n},n)}{n}-\sum_{j=1}^{n_d}\Rf_{m_n}^j\right|\geq \epsilon \sum_{j=1}^{n_d}\Rf_{m_n}^j \right)=0, \label{2.6b}
\end{eqnarray}
and if $k \in \N^*$
\begin{eqnarray}
\lim_{n \rightarrow + \infty}\p^V_{0}\left(\bigcup_{l \in \{-k, \cdots,k\}}\left\{ \left|\frac{\lo(\Ce_{m_n+l},n)}{n}-\sum_{j=1}^{n_d}\Rf_{m_n+l}^j\right|\geq \epsilon  \sum_{j=1}^{n_d}\Rf_{m_n+l}^j\right\}\right)=0. \label{2.8}
\end{eqnarray}
\end{Pro}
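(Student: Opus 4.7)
The plan is to prove \eqref{2.6} first; the statements \eqref{2.6b} and \eqref{2.8} will follow by summing over $j$ and by repeating the argument at $m_n+l$. By \eqref{2.2} the walk starting at $0_d$ hits $\Cm$ in time $o(n)$ with probability tending to $1$, and by \eqref{2.3} it stays inside $\B_{M_n}$ throughout $[0,n]$ with probability at least $1-(\log n)^{2(d-1)}n^{-(\log n)^{-1/2}}$. Both error terms are much smaller than $|\Cm|^{-1}=(\log n)^{-O(1)}$, so after applying the strong Markov property at $T_{\Cm}$ it suffices to prove, uniformly in $y_0\in\Cm$,
$$
\p^V_{y_0}\Bigl(\bigl|\tfrac{\lo(\Cm,\,n\wedge T_{\bar{\B}_{M_n}})}{n}-\Rf_{m_n}^j\bigr|\ge \epsilon \Rf_{m_n}^j\Bigr)=o\bigl(|\Cm|^{-1}\bigr).
$$
The underlying object is now the reversible chain killed on $\bar{\B}_{M_n}$, whose invariant measure is $\pi$ restricted to $\B_{M_n}$.

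Next comes the moment computation. Let $(T_{\Cm,i})_{i\ge 1}$ denote the successive visits to $\Cm$, so $\lo(\Cm,n)\approx n/\E^V_{y_0}[T_{\Cm,1}]$ by renewal. Since $\pi\equiv \pi^j_{m_n}$ on $\Cm$, Kac's lemma for the chain confined to $\B_{M_n}$ gives $\sum_{z\in\Cm}\pi^j_{m_n}\,\E^V_z[T_{\Cm}^+]=\pi(\B_{M_n})(1+o(1))$, the $o(1)$ absorbing the negligible contribution of trajectories that exit $\B_{M_n}$ before returning. Uniformity in $z\in\Cm$ is obtained from the Dirichlet-form estimates \ref{4.17}--\ref{4.19} of the Appendix combined with items 2 and 3 of Lemma \ref{lemenv}, which together show that the walk mixes within $\Cm$ on a timescale much smaller than $\pi(\B_{M_n})/(|\Cm|\pi^j_{m_n})$. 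This yields $\E^V_{y_0}[\lo(\Cm,n\wedge T_{\bar{\B}_{M_n}})]=n\Rf^j_{m_n}(1+o(1))$ uniformly. For the second moment, expanding the square and conditioning at each visit to $\Cm$ reduces the computation to the covariance of successive excursion lengths; using the same hitting-time estimates together with the bound $\Delta_n\le \log n(1-\epsilon_n)$, one obtains $\textrm{Var}^V_{y_0}[\lo(\Cm,\,n\wedge T_{\bar{\B}_{M_n}})] = o\bigl((n\Rf^j_{m_n})^2/|\Cm|\bigr)$, and Chebyshev's inequality delivers \eqref{2.6}.

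Statement \eqref{2.6b} follows from \eqref{2.6} by summing over the bounded number $n_d$ of level sets, using $\lo(\Ct_{m_n},n)=\sum_{j=1}^{n_d}\lo(\Cm,n)$. For \eqref{2.8}, the whole scheme applies verbatim with $m_n$ replaced by $m_n+l$ for $l\in\{-k,\ldots,k\}$: Lemma \ref{lemenv} is uniform on this bounded window, so entry-time and trap estimates remain valid, and a union bound over the $2k+1$ values of $l$ and the $n_d$ level sets concludes.

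I expect the main obstacle to be the uniform-in-$y_0$ first moment estimate of the second paragraph. Unlike the one-dimensional case of \cite{GanPerShi}, where hitting probabilities admit explicit closed forms in $V$, here only the Dirichlet-form inequalities of the Appendix are available; turning those inequalities into a sharp asymptotic equality for $\E^V_z[T_{\Cm}^+\wedge T_{\bar{\B}_{M_n}}]$ uniformly over the random, growing, geometrically intricate level set $\Cm$ is the delicate part of the proof, and will rely crucially on the sharp control on $\Delta_n$ provided by item 3 of Lemma \ref{lemenv}.
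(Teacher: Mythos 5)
Your skeleton is in the right family (reduce to the walk started on $\Cm$ via \eqref{2.2}--\eqref{2.3}, then apply a second-moment/Chebyshev argument), but the proposal leaves the genuinely hard part as a black box and parametrizes the problem in a way that makes that part harder than it needs to be.

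The paper does not estimate $\E^V_{y_0}[\lo(\Cm,n)]$ or $\textrm{Var}^V_{y_0}[\lo(\Cm,n)]$ at the deterministic time $n$, which is what you propose and what would indeed require an equilibration argument. Instead it observes that $\{|\lo(\Cm,n)/n-\Rf^j_{m_n}|\geq\epsilon\Rf^j_{m_n}\}\cap\{T_{\bar\B_{M_n}}>n\}$ is contained in $\A_n^+\cup\A_n^-$ with $\A_n^{\pm}=\{\pm\lo(\B_{M_n},T_{\Cm,n\Rf^j_{m_n}(1\mp\epsilon)})\geq\pm n\}$, i.e.\ it reparametrizes by the number of returns to $\Cm$ rather than by elapsed time. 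Because $\lo(\B_{M_n},T_{\Cm,n_{\epsilon}})$ is then a sum of $n_{\epsilon}$ per-excursion contributions, its first moment is given \emph{exactly} by the reversibility identity of Lemma~\ref{Lemmoy} (the Kac-type formula $\sum_{z\in\A}\Ea_z[\lo(x,T^+_{\A})]=\pi(x)/\pi(y)$), which is an equality, not a Dirichlet inequality. Your closing remark that ``the delicate part'' is upgrading Dirichlet-form inequalities to a sharp asymptotic for the expected return time misdiagnoses the difficulty: that sharpness comes for free from reversibility; the Dirichlet bounds \eqref{4.17}--\eqref{4.19b} are used only in the variance estimate, where an inequality is all that is needed.

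The genuinely delicate step, which your write-up waves through with ``reduces to the covariance of successive excursion lengths,'' is the second moment of the excursion sum. Here the paper's Lemma~\ref{lem3.3} decomposes
\begin{eqnarray*}
\sum_{y_0\in\Cm}\tEa_{y_0}\Bigl[\bigl(\lo(\B_{M_n},T_{\Cm,n_{\epsilon}})-n_{\epsilon}\Rt\bigr)^2\Bigr]
\end{eqnarray*}
into a diagonal term (controlled by Cauchy--Schwarz, Lemma~\ref{Lemvar} and the Dirichlet bounds, giving $\ct|\B_{M_n}|^3 n^{1-\epsilon_n}$) plus an off-diagonal ``covariance'' term. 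The latter is killed by two facts you never invoke: (i) the centering $\sum_{v\in\Cm}\underbar{E}_v=0$, which follows from Lemma~\ref{ap2}; and (ii) the mixing-within-$\Cm$ estimate of Lemma~\ref{A.14}, $|\tpa_v(X_{T^+_{\Cm,i}}=v_i)-1/|\Cm||\leq(1-1/|\Cm|)^i$, which says that after $|\Cm|^{1+\epsilon_1}$ returns the law of the entry point is essentially uniform and the covariances cancel. Without naming these two ingredients, the claimed bound $\textrm{Var}=o((n\Rf^j_{m_n})^2/|\Cm|)$ is not justified. Relatedly, the paper works with $\sum_{y_0\in\Cm}\p^V_{y_0}(\cdot)$ rather than a uniform-in-$y_0$ $o(|\Cm|^{-1})$ bound precisely because all the appendix identities (Lemmas~\ref{Lemmoy}, \ref{Lemvar}, \ref{lem3.3}, \ref{ap2}) are stated as sums over starting points in $\Cm$; framing the goal as a pointwise-uniform bound pushes you away from the structure that makes the computation tractable. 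Your reductions of \eqref{2.6b} from \eqref{2.6} and of \eqref{2.8} via a union bound over $|l|\le k$ are fine and match the paper.
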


\begin{Pre}
\textit{Proof of \ref{2.6}} 
Let us denote  $\A_n=\left\{\left|\frac{\lo(\Cm,n)}{n}-\Rf_{m_n}^j\right|\geq \epsilon \Rf_{m_n}^j\right\}$ and $\dis_n=\{T_{\bar{\Bt}_{M_n}} > n\}$. It is easy to check that $\A_n\cap \dis_n \subset \A_n^+\cup \A_n^-$, where $\A_n^{\pm}=\{\pm \lo(\B_{M_n},T_{\Cm,n\Rf_{m_n}^j(1\mp\epsilon) })\geq \pm n   \}$ (assuming for simplicity that $n\Rf_{m_n}^j (1\pm \epsilon)$ are integers). Note that thanks to \ref{2.3} 
\begin{eqnarray*}
\lim_{n \rightarrow + \infty} \sum_{y_0\in \Cm}  \tpa_{y_0}(\A_n) \leq \lim_{n \rightarrow + \infty} \left( \sum_{y_0\in \Cm}\tpa_{y_0}(\A_n^+)+  \sum_{y_0\in \Cm} \tpa_{y_0}(\A_n^-)\right).
\end{eqnarray*} 
Let us give an estimate of $\tpa_{y_0}( \A_n^+)$. Denoting $n_{\epsilon}=n\Rf_{m_n}^j(1-\epsilon)$ and applying Markov inequality:
\begin{eqnarray}
\tpa_{y_0}( \A_n^+) & \equiv  & \tpa_{y_0} \left( \lo(\B_{M_n},T_{\Cm,n_{\epsilon}})-n_{\epsilon}\Rt  \geq \epsilon n\right), \nonumber \\
& \leq & \tEa_{y_0} \left(\left( \lo(\B_{M_n},T_{\Cm,n_{\epsilon}})-n_{\epsilon}\Rt \right)^2 \right)\left(\epsilon n\right)^{-2}.  \label{2.10}
\end{eqnarray}
Note that, as $\Cm$ is not a singleton, $\lo(\B_{M_n},T_{\Cm,j}) $  can not be written as a sum of i.i.d random variables and $\tEa_{y_0} \left(\left( \lo(\B_{M_n},T_{\Cm,n_{\epsilon}})-n_{\epsilon}\Rt \right)^2 \right)$ is not the variance of $\lo(\B_{M_n},T_{\Cm,n_{\epsilon}})$. For any $w \in \Cm$ let us denote $\underbar{E}_{w}=\tEa_{w}\left[(\lo({\B_{M_n}},T^+_{\Cm})-\Rt)\right]$, from Lemma \ref{lem3.3}, we have: 
\begin{eqnarray*}
& & \sum_{y_0\in \Cm}\tEa_{y_0} \left(\left( \lo(\B_{M_n},T_{\Cm,n_{\epsilon}})-n_{\epsilon}\Rt \right)^2 \right)   \\ &=&   n_{\epsilon} \sum_{y_0 \in \Cm} \tEa_{y_0} \left[\left(\lo(\B_{M_n},T^+_{\Cm})-\Rt\right)^2\right] \\ 
 &+& 2  \sum_{i=1}^{n^{\epsilon}-1} (n_{\epsilon}-i)  \sum_{v_i\in \Cm} \sum_{v \in \Cm} \underbar{E}_{v_i}  \underbar{E}_v \p_v^V(X_{T^+_{\Cm,i}}=v_i).
 \end{eqnarray*} 
For the \textit{first sum} we apply Cauchy-Schwarz inequality and get 
\begin{eqnarray}
\tEa_{y_0} \left[\left(\lo(\B_{M_n},T^+_{\Cm})-\Rt\right)^2\right]\leq |\B_{M_n}| \sum_{z  \in \B_{M_n}}\tEa_{y_0} \left[ \left(\lo(z,T^+_{\Cm})-\frac{\pi(z)}{\pi_{m_n}^j|\Cm|}\right)^2\right], \nonumber
\end{eqnarray} 
then with the help of Lemma \ref{Lemvar}  of the Appendix, and Remark \ref{rem2.3}, 
\begin{eqnarray}
\sum_{y_0\in \Cm}\tEa_{y_0} \left[\left(\lo(\B_{M_n},T^+_{\Cm})-\Rt\right)^2\right] & \leq & |\B_{M_n}| \sum_{z  \in \B_{M_n}} \frac{\tpi(z)}{\pi_{m_n}^{j}}\frac{2}{\tpa_z(T_z>T^+_{\Cm})} \nonumber \\
& \leq & \ct |\B_{M_n}|\sum_{z  \in \B_{M_n}} \frac{1}{\tpa_z(T_z>T^+_{\Cm})}.  \nonumber
\end{eqnarray} 
Finally thanks to Lemma \ref{Lem4.6} equations \ref{4.19} and \ref{4.19b} together with Lemma \ref{lemenv} statement 3
\begin{eqnarray}
\sum_{y_0\in \Cm}\tEa_{y_0} \left[\left(\lo(\B_{M_n},T^+_{\Cm})-\Rt\right)^2\right] &  \leq & \ct  |\B_{M_n}|^3n^{1-\epsilon_n},  \label{2.12}
\end{eqnarray} 
and we recall that $\epsilon_n=((\log \log n)^2) / \log n$.
\noindent \\ For the \textit{second sum}, Lemma \ref{A.14} shows that for $n$ large enough and $i \geq |\Cm|^{1+\epsilon_1}$  with $\epsilon_1>0$
\begin{eqnarray*} 
\left|\tpa_v\left(X_{T^+_{\Cm,i}}=v_i \right)-\frac{1}{ |\Cm|} \right| \leq \exp(- |\Cm|^{\epsilon_1}), 
\end{eqnarray*} 
 so asympoticaly when $n$ increases $\tpa_v\left(X_{T^+_{\Cm,i}}=v_i \right)$  does not depend on $v$, moreover thanks to Lemma \ref{ap2} $\sum_{v \in \Cm}\underbar{E}_v=0 $, therefore we get for $n$ large enouh \\
\begin{eqnarray*}
 & & \sum_{i=1}^{n_{\epsilon}-1} (n_{\epsilon}-i)  \sum_{v_i\in \Cm} \sum_{v \in \Cm} \underbar{E}_{v_i}  \underbar{E}_v \tpa_v(X_{T^+_{\Cm,i}}=v_i) \\
 & = &  \sum_{i=1}^{ |\Cm|^{1+\epsilon_1}} (n_{\epsilon}-i)  \sum_{v_i\in \Cm} \sum_{v \in \Cm} \underbar{E}_{v_i}  \underbar{E}_v \tpa_v(X_{T^+_{\Cm,i}}=v_i).
 \end{eqnarray*}
Finally for $n$ large enough
\begin{eqnarray*}
 & & \left|\sum_{i=1}^{n_{\epsilon}-1} (n_{\epsilon}-i)  \sum_{v_i\in \Cm} \sum_{v \in \Cm} \underbar{E}_{v_i}  \underbar{E}_v \tpa_v(X_{T^+_{\Cm,i}}=v_i)\right| \\ & \leq & n_{\epsilon} |\Cm|^{1+\epsilon_1}\left(\sum_{y_0\in \Cm}  \tEa_{y_0}\left[\lo({\B_{M_n}},T^+_{\Cm})\right]\right)^2,
 \end{eqnarray*}
and by using Lemma \ref{ap1} and Remark \ref{rem2.3}  we obtain
\begin{eqnarray}
 \left|\sum_{i=1}^{n_{\epsilon}-1} (n_{\epsilon}-i)  \sum_{v_i\in \Cm} \sum_{v \in \Cm} \underbar{E}_{v_i}  \underbar{E}_v \tpa_v(X_{T^+_{\Cm,i}}=v_i)\right| \leq \textrm{const} n_{\epsilon} |\Cm|^{1+\epsilon_1}|\B_{M_n}|^2. \label{2.14}
\end{eqnarray} 
To finish we collect \ref{2.10}, \ref{2.12}  and \ref{2.14}, we get 
\begin{eqnarray*}
\sum_{y_0 \in \Ct_{m_n}^j} \tpa_{y_0}( \A_n^+) \leq   \frac{ \ct \Rf_{m_n}^j |\B_{M_n}|^3 }{\epsilon^2 n^{\epsilon_n}}+ \frac{\textrm{const}\Rf_{m_n}^j |\Cm|^{1+\epsilon_1} |\B_{M_n}|^2}{ \epsilon^2 n}.
\end{eqnarray*}
 By definition $\Rf_{m_n}^j \leq 1$, so we get $\lim_{n \rightarrow + \infty}\sum_{y_0 \in \Ct_{m_n}^j}\tpa_{y_0}( \A_n^+)=0$ by applying Remark \ref{1.1} and  statement 2 of Lemma \ref{lemenv}.
\noindent We get the same estimate with the same method for $\sum_{y_0 \in \Ct_{m_n}^j} \tpa_{y_0}( \A_n^-)$.  

\noindent \textit{Proof of \ref{2.6b}} First notice that $\sum_{j=1}^{n_d}\Rf_{m_n}^j \leq 1$ and $n_d$ is bounded, with $\cup_{j=1}^{n_d}\Cm=\Ct_{m_n}$ therefore to get \ref{2.6b} from \ref{2.6}, we only have to check that:
\begin{eqnarray*}
\lim_{n \rightarrow +Ê\infty }\p^V_{0}\left(\left|\frac{\lo(\Cm,n)}{n}-\Rf_{m_n}^j\right|\geq \epsilon \Rf_{m_n}^j \right)=0. 
\end{eqnarray*}
Only the very begining of the computations differs from what we did above because the walk starts from $0$, by using \ref{2.2} we easily get that
\begin{eqnarray*}
& & \p^V_{0}\left(\left|\frac{\lo(\Cm,n)}{n}-\Rf_{m_n}^j\right|\geq \epsilon  \Rf_{m_n}^j \right) \\ & \leq & \sum_{z\in \Cm} \left(\p^V_{z}\left(\left|\frac{\lo(\Cm,n)}{n}-\Rf_{m_n}^j\right|\geq \epsilon  \Rf_{m_n}^j \right)+\p^V_{z}\left(\left|\frac{\lo(\Cm,n')}{n}-\Rf_{m_n}^j\right|\geq \epsilon  \Rf_{m_n}^j \right) \right)+o(1) . 
\end{eqnarray*}
with $n'=n(1-1/n^{\epsilon_n})$, and $\lim_{n \rightarrow + \infty }o(1)=0$. Then the computations remain the same as above. \\
\noindent \textit{Proof of \ref{2.8}}, we use the same method as above and we get the following inequality, for all $1 \leq j \leq n_d$ and $l$ such that $|l|\leq k$:
\begin{eqnarray*}
\lim_{n \rightarrow + \infty} \sum_{y_0\in \Ct_{m_n}^1}\p^V_{y_0}\left(\left|\frac{\lo(\Ce_{m_n+l}^j,T_{\Ct_{m_n}^1,n \Rf_{m_n}^1})}{n}-\Rf_{m_n+l}^j\right|\geq \epsilon \Rf_{m_n+l}^j \right)=0.
\end{eqnarray*}
This leads easily to the result by considering \ref{2.6} and the above discussion about \ref{2.6b}.
\end{Pre}

\subsection{Proof of Theorem \ref{th1.3} (annealed results)}
In this part we, prove a result in law for the random environment, we introduce the potential conditioned to remain positive $\bar{S}$, the constants $p_i$, this leads to our second theorem.

\noindent The main result of this section is the following

\begin{Pro} \label{cvloi2} For any $k \in \N$,
\begin{eqnarray*}
\left(\lo(n,\Ce_{m_n+l}),\ -k \leq l \leq k)\right) \cvloi (\bar{\Pi}(l),\ -k\leq l \leq k), 
\end{eqnarray*}
recall that $\bar{\Pi}$ is defined in \ref{1.14}.
\end{Pro}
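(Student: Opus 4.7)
The plan is to reduce Proposition \ref{cvloi2} to a convergence-in-law statement for the environmental functional, and then handle the latter by truncation plus the Golosov/Bertoin limit for the potential. From the quenched Proposition stated just above, for any $\delta>0$ there exists $A_n\in\f_e$ with $P_e(A_n)\to 1$ such that on $A_n$ the quenched probability that $|\lo(\Ce_{m_n+l},n)/n-\sum_j\Rf^j_{m_n+l}|\leq \delta\sum_j\Rf^j_{m_n+l}$ for all $|l|\leq k$ tends to $1$. Since $\sum_j\Rf^j_{m_n+l}\leq 1$ is tight and $\delta$ is arbitrary, a standard Slutsky argument reduces the proof to showing the convergence in law (under $P_e$)
\[
\Bigl(\sum_{j=1}^{n_d} \Rf^j_{m_n+l}\Bigr)_{-k\leq l\leq k} \cvloi \bigl(\bar{\Pi}(l)\bigr)_{-k\leq l\leq k}.
\]

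To prove this, I would first truncate the normalization $\sum_{x\in\B_{M_n}}\pi(x)$ appearing in the denominator of $\Rf^j_{m_n+l}$ to a window of width $(\log n)^{2-\epsilon}$ around $m_n$: since $\Delta_n\leq \log n(1-\epsilon_n)$ by Lemma \ref{lemenv} and $|\B_{M_n}|$ grows only polylogarithmically, the contribution of the $\pi(x)$ for $x\in\B_{M_n}\setminus\bigcup_{|i|\leq(\log n)^{2-\epsilon}}\Ct_{m_n+i}$ is negligible relative to the central scale $|\Ct_{m_n}|\pi_{m_n}$. Next, as $(\delta_x)$ are i.i.d.\ and $|\Ct_{m_n+i}|\asymp m_n^{d-1}\to\infty$, the strong law yields $|\Ce^j_{m_n+i}|/|\Ct_{m_n+i}|\to p_j$ uniformly over $|i|\leq(\log n)^{2-\epsilon}$, where $p_j$ is the $P_2$-probability of the local $\delta$-configuration that defines $\Ct^j$. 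Combined with $|\Ct_{m_n+i}|/|\Ct_{m_n}|\to 1$ on the same range, factoring $|\Ct_{m_n}|$ out of numerator and denominator gives
\[
\sum_{j=1}^{n_d} \Rf^j_{m_n+l} = (1+o_{P_e}(1))\,\frac{\sum_j p_j\,\pi^j_{m_n+l}}{\sum_{|i|\leq(\log n)^{2-\epsilon}}\sum_j p_j\,\pi^j_{m_n+i}}.
\]

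Finally, using the structural identity \ref{valpi}, I would factor $e^{-S_{m_n}}$ out of every $\pi^j_{m_n+l}$; this prefactor cancels between numerator and denominator, so the ratio depends only on the centered process $(S_{m_n+l}-S_{m_n})_{l\in\Z}$. By the Golosov/Bertoin theorem recalled in Section \ref{parSp}, this centered process converges in law on compacts to $(\bar{S}_l)_{l\in\Z}$, and because $\sum_l e^{-\bar{S}_l}<\infty$ a.s.\ the truncated denominator converges almost surely to its full-range analogue appearing in \ref{1.14}; the continuous mapping theorem then delivers the claimed joint convergence. The main obstacle is the truncation: since $\Ct_{m_n+i}$ is not itself a level set of $\pi$ and the weighted quantity $|\Ct_i|\pi^j_i$ rather than $\pi^j_i$ enters the normalization, one cannot simply reuse the one-dimensional estimate of \cite{GanPerShi} and must carry out the tail bound by hand, balancing the polynomial growth of $|\Ct_i|$ against the exponential decay dictated by $\Delta_n$.
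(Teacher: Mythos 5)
Your overall plan coincides with the paper's: reduce to the convergence in law of $(\sum_{j}\Rf^j_{m_n+l})_{|l|\leq k}$ via the quenched proposition, truncate the normalizing sum to a window of width $(\log n)^{2-\epsilon}$ around $m_n$, extract the $p_j$ from the SLLN for the i.i.d.\ $\delta$'s, factor out $e^{-S_{m_n}}$ and invoke Golosov/Bertoin. Two of your intermediate steps are stated too loosely to go through as written, though.

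First, the truncation of $\sum_{x\in\B_{M_n}}\pi(x)$ to the central window is not a consequence of $\Delta_n\leq\log n(1-\epsilon_n)$. That bound controls the \emph{maximum} potential drop, not the size of the tail sum $\sum_{|i|\geq r}e^{-(S_{m_n+i}-S_{m_n})}$; a maximum bound of this kind gives no summability. The paper instead invokes the probabilistic tail estimate \ref{lemP}, namely $P_1(\sum_{|i|\geq r}e^{-(S_{m_n+i}-S_{m_n})}>\epsilon_0')\leq\ct/(\epsilon_0'\sqrt{r})$, applied with $r=(\log n)^{2-\epsilon}$ and $\epsilon_0'=(\log n)^{-1/4}$, and this is what beats the polylogarithmic growth $|\Ct_{M_n}|/|\Ct_{m_n}|\lesssim(\log n)^{\epsilon_0(d-1)}$. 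Your sketch conflates the quenched ingredient $\Delta_n$ (used in the moment estimates) with the annealed tail control, and would not close the bound.

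Second, the final step is not a plain application of the continuous mapping theorem. The truncated denominator $\tilde{\sum}_{(\log n)^{2-\epsilon}}=\sum_{|i|\leq(\log n)^{2-\epsilon}}\sum_j p_j\,\ttpi^j_{m_n+i}$ is a sum over a window growing with $n$, whose summands are built from $(S_{m_n+i}-S_{m_n})$ and therefore change in distribution with $n$; it does not ``converge almost surely to its full-range analogue''. The paper handles this with a double limit: one first compares $\tilde{\sum}_{n_1}$ to $\tilde{\sum}_{(\log n)^{2-\epsilon}}$ for a fixed cutoff $n_1$ (step \ref{step1}, again via \ref{lemP}), then takes $n\to\infty$ using Golosov's finite-dimensional convergence of $(S_{m_n+i}-S_{m_n})_i$ to $(\bar{S}_i)_i$, and only then lets $n_1\to\infty$, using Bertoin's $\bar{\sum}_{+\infty}<\infty$ a.s.\ to pass to the full sum (the sandwich \ref{step2}). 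Without this intermediate cutoff at fixed $n_1$ your argument has no valid way to exchange the finite-dimensional limit with the infinite sum.
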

\noindent Considering what we did in the previous paragraph, we only need to prove the following
\begin{Lem} \label{2.6c} Let $k \in \N^*$, there exists a sequence $(p_j,1 \leq j \leq n_d )$ of non negative terms satisfying $\sum_{j=1}^{n_d}p_j=1$ such that
\begin{eqnarray*}
\left(\sum_{j=1}^{n_d}\Rf_{m_n+i}^j, -k \leq i \leq k \right) \cvl \left(\frac{\sum_{j=1}^{n_d}p_j\bar{\pi}^{j}_i}{{\sum_{l=-\infty}^{+ \infty} \sum_{j=1}^{n_d}p_j\bar{\pi}^{j}_l }},\ -k \leq i \leq k \right), 
\end{eqnarray*}
recall that $\bar{\pi}_.^.$ is defined Section \ref{parSp}, and $\cvl$ is the convergence in law under $P_e$.
\end{Lem}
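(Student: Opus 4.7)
The plan is to control the numerator and denominator of each $\Rf^j_{m_n+i}$ separately, reducing the multidimensional problem to one-dimensional limit theorems for Sinai-type potentials via three steps: (i) truncate the denominator to a polylogarithmic window $\{|k-m_n|\le(\log n)^{2-\epsilon}\}$ around the bottom of the valley; (ii) identify the proportions $p_j$ via a law of large numbers for the i.i.d.\ marks $(\delta_x)$; (iii) invoke the Golosov--Bertoin convergence of the environment seen from $m_n$ to the conditioned potential $\bar{S}$ and combine.

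\textbf{Step 1 (truncation of the denominator).} Write $\sum_{x\in\B_{M_n}}\pi(x)=\sum_{k=0}^{M_n}\sum_{j=1}^{n_d}|\Ct_k^j|\,\pi_k^j$ and use the identity
\[
\pi_k^j=e^{-S_{m_n}}\,e^{-(S_{k+a_0^j}-S_{m_n})/2}\sum_{l=1}^{2d}e^{-(S_{k+a_l^j}-S_{m_n})/2}
\]
together with Remark~\ref{rem2.3} to express each summand as $e^{-S_{m_n}}$ times a factor controlled by $e^{-(S_k-S_{m_n})/2}$ up to bounded $\delta$-fluctuations. Since $|\Ct_k|\asymp k^{d-1}$ (Remark~\ref{1.1}) and $k\le M_n\le(\log n)^{2+\epsilon}$ (Lemma~\ref{lemenv}), I would invoke a quantitative tail estimate of Sinai type, forcing $S_k-S_{m_n}$ to dominate any polynomial in $\log n$ uniformly on $|k-m_n|>(\log n)^{2-\epsilon}$; this shows that the contribution of the complement of the window is $o_{P_e}(1)$ times the full denominator.

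\textbf{Step 2 (law of large numbers for the $\delta$'s).} For fixed $j$, the quantity $|\Ct_k^j|$ is a sum of indicators $\un_{\pi(x)=\pi^j_k}$ over $x\in\Ct_k$, each depending only on $\delta_x$ and its $2d$ neighbours, so the family has bounded range of dependence. By Bernstein's inequality (or a finite-range LLN) together with a union bound over the finitely many $j\le n_d$ and the $O((\log n)^{2-\epsilon})$ values of $i$ in the window, one obtains $P_e$-a.s.\ uniform convergence
\[
\frac{|\Ct_{m_n+i}^j|}{|\Ct_{m_n+i}|}\longrightarrow p_j,
\]
where $p_j$ is the $P_2$-probability of the $\delta$-configuration producing the $j$-th value of $\pi$; clearly $p_j\ge 0$ and $\sum_j p_j=1$.

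\textbf{Step 3 (convergence to $\bar{S}$ and conclusion).} Factoring $e^{-S_{m_n}}$ out of numerator and denominator and using $|\Ct_{m_n+i}|/|\Ct_{m_n}|=1+o(1)$ uniformly for $|i|\le(\log n)^{2-\epsilon}$ (Remark~\ref{1.1}), the ratio $\sum_{j}\Rf^j_{m_n+i}$ becomes, on the good event,
\[
\frac{\sum_{j=1}^{n_d} p_j\,e^{-(S_{m_n+i+a_0^j}-S_{m_n})/2}\sum_{l=1}^{2d}e^{-(S_{m_n+i+a_l^j}-S_{m_n})/2}}{\sum_{|r|\le(\log n)^{2-\epsilon}}\sum_{j=1}^{n_d} p_j\,e^{-(S_{m_n+r+a_0^j}-S_{m_n})/2}\sum_{l=1}^{2d}e^{-(S_{m_n+r+a_l^j}-S_{m_n})/2}}+o_{P_e}(1).
\]
By Golosov and Bertoin, the centred process $(S_{m_n+r}-S_{m_n})_{r\in\Z}$ converges in law (jointly on any finite window) to $(\bar{S}_r)_{r\in\Z}$, and the $P_1$-a.s.\ summability $\sum_l e^{-\bar{S}_l}<+\infty$ combined with the tail bound from Step~1 provides the uniform integrability needed to pass to the limit in the denominator as well. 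This yields the announced convergence in law. The main obstacle is Step~1: the quantitative tail bound on $S_k-S_{m_n}$ outside the polylog window must be strong enough to beat the polynomial factor $|\Ct_k|\asymp k^{d-1}$ after normalization, which requires a one-dimensional fluctuation estimate of the type underlying Lemma~\ref{lemenv}; the rest of the argument reduces rather cleanly to Sinai-type theory.
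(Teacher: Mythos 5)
Your three-step plan (truncate the denominator to a polylogarithmic window around $m_n$, identify the proportions $p_j$ via a law of large numbers for the $\delta$'s, then invoke Golosov--Bertoin convergence to $\bar{S}$) is the same structure as the paper's proof, and your Step~2 via a finite-range Bernstein/LLN is a perfectly good and arguably more explicit alternative to the paper's appeal to Kolmogorov's zero--one law for the existence of $p_j$. Two places in your argument need tightening, however.

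In Step~1 you describe the truncation as ``forcing $S_k-S_{m_n}$ to dominate any polynomial in $\log n$ uniformly on $|k-m_n|>(\log n)^{2-\epsilon}$''. That is false as a pointwise or almost-sure statement: the potential fluctuates and there is no such uniform lower bound. What the paper actually uses is the in-probability tail estimate $P_1\bigl(\sum_{|i|\ge r}e^{-(S_{m_n+i}-S_{m_n})}>\epsilon_0'\bigr)\le \mathrm{const}/(\epsilon_0'\sqrt{r})$ (its equation \ref{lemP}), applied with $r=(\log n)^{2-\epsilon}$ and $\epsilon_0'$ a negative power of $\log n$ chosen so that the polynomial volume factor $|\Ct_k|/|\Ct_{m_n}|\le(\log n)^{\epsilon_0(d-1)}$ is absorbed. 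Your conclusion ($o_{P_e}(1)$) is correct, but the intermediate deterministic bound you assert is not the mechanism.

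In Step~3, ``uniform integrability'' is not the right tool: you are not controlling expectations but the convergence in law of a ratio whose denominator is a sum over a window growing with $n$, and finite-dimensional convergence of $(S_{m_n+r}-S_{m_n})_r$ alone does not let you pass to a growing index set. The paper handles this with an explicit double-limit sandwich: for a \emph{fixed} truncation level $n_1$, finite-dimensional convergence gives $\tilde{F}_n(l)/\tilde{\sum}_{n_1}\Rightarrow\bar{F}(l)/\bar{\sum}_{n_1}$; the same tail estimate shows $\tilde{\sum}_{n_1}$ and $\tilde{\sum}_{(\log n)^{2-\epsilon}}$ differ by $o_{P_1}(1)$ uniformly in large $n$; and Bertoin's a.s.\ convergence $\bar{\sum}_{n_1}\to\bar{\sum}_{+\infty}$ closes the argument on letting $n_1\to\infty$ \emph{after} $n\to\infty$. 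Without this sandwich (or an equivalent monotonicity/tightness argument) the step does not go through as written.
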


\begin{Pre}
First we prove that for all $k \in \N^*$, $\epsilon>0$, there exists $\gamma>0$ such that
\begin{eqnarray}
\lim_{n \rightarrow + \infty } P_e\left(\bigcap_{-k \leq l \leq k} \left\{ \left|  \sum_{j=1}^{n_d}\Rf_{m_n+l}^j - \frac{\sum_{j=1}^{n_d}F^j(m_n+l,m_n)}{\sum_{i=-(\log n)^{2-\epsilon} }^{(\log n)^{2-\epsilon}}{\sum_{j=1}^{n_d}F^j(m_n+i,m_n)}} \right| > \frac{\ct}{ (\log n)^{\gamma}} \right\} \right)=0,  \label{2.18}
\end{eqnarray}
where 
\begin{eqnarray*}
F^j(r,m_n):=\frac{|\Ct_{r}^j|}{|\Ct_{m_n}|}\ttpi^j_{r},\ \ttpi^j_{r}=\pi_r^j\exp(S_{m_n}). 
\end{eqnarray*}
Recalling the definition of $\Rf_{m_n+l}^j$,
\begin{eqnarray*}
\Rf_{m_n+l}^j= \frac{|\Ct_{m_n+l}^j|\pi_{m_n+l}^j}{ \sum_{x \in \B_{M_n}} \pi(x)} \equiv  \frac{|\Ct_{m_n+l}^j|/|\Ct_{m_n}| \ttpi_{m_n+l}^j }{1/|\Ct_{m_n}| \sum_{x \in \B_{M_n}} \pi(x)\exp(S_{m_n})} 
\end{eqnarray*}
we split $\sum_{x \in \B_{M_n}} \pi(x)\exp(-S_{m_n})$ into two parts
\begin{eqnarray}
\sum_{x \in \B_{M_n}} \pi(x)= \sum_{i=-(\log n)^{2-\epsilon}}^{(\log n)^{2-\epsilon}} \sum_{z \in \Ce_{m_n+i}} {\tpi(z)}\exp(S_{m_n}) + \sum_{|i|\geq (\log n)^{2-\epsilon}}\sum_{z \in \Ce_{m_n+i}\cap \B_{M_n}} {\tpi(z)}\exp(S_{m_n}). \nonumber
\end{eqnarray}
 For the second sum, with the help of Remark \ref{rem2.3}, we have $P_e.a.s.$
\begin{eqnarray*}
\sum_{z \in \Ce_{m_n+i}\cap \B_{M_n}} {\tpi(z)}\exp(S_{m_n}) \leq \ct |\Ct_{M_n}| \exp(-(S_{m_n+i}-S_{m_n})),
\end{eqnarray*} 
then thanks to Remark \ref{1.1} and statement 2. of Lemma \ref{lemenv}, with a $P_e$ probability converging to one
\begin{eqnarray*}
& & \frac{1}{|\Ct_{m_n}|}\sum_{|i|\geq (\log n)^{2-\epsilon}} \sum_{z \in \Ce_{m_n+i}\cap \B_{M_n}} {\tpi(z)}\exp(S_{m_n}) \\ &\leq&  \ct (\log n)^{\epsilon_0(d-1)} \sum_{|i|\geq (\log n)^{2-\epsilon}, -m_n \leq i \leq M_n-m_n }  \exp(-(S_{m_n+i}-S_{m_n})),
\end{eqnarray*} 
with $\epsilon_0>0$ to be chosen. Moreover we know (see for example the Appendix of \cite{Pierre3}) that for all $r$ 
\begin{eqnarray}
\lim_{n \rightarrow \infty }P_1\left(\sum_{ |i|\geq r, -m_n \leq i \leq M_n-m_n  }  e^{-(S_{m_n+i}-S_{m_n})} > \epsilon_0' \right) \leq \frac{\ct}{\epsilon_0' \sqrt{r}},  \label{lemP}
\end{eqnarray}
with $\epsilon_0'>0$. Assembling what we did above leads to: choosing $\epsilon_0'=(\log n)^{-1/4}$ there exists a $\gamma>0$ such that 
\begin{eqnarray*}
\lim_{n \rightarrow + \infty}P_e \left(\frac{1}{|\Ct_{m_n}|} \left| \sum_{x \in \B_{M_n}} \pi(x) - \sum_{i=-(\log n)^{2-\epsilon}}^{(\log n)^{2-\epsilon}} \sum_{z \in \Ce_{m_n+i}} {\tpi(z)}\exp(S_{m_n}) \right| \geq (\log n)^{-\gamma}  \right)=0.  
\end{eqnarray*}
Notice that the above normalized double sum is $P_e.a.s.$ larger than a  strictly positive constant and it can be re-written in the following way,
\begin{eqnarray*}
\frac{1}{|\Ct_{m_n}|} \sum_{i=-(\log n)^{2-\epsilon}}^{(\log n)^{2-\epsilon}} \sum_{z \in \Ce_{m_n+i}} \tpi(z)= \frac{1}{|\Ct_{m_n}|} \sum_{i=-(\log n)^{2-\epsilon}}^{(\log n)^{2-\epsilon}} \sum_{j=1}^{n_d} |\Ce^j_{m_n+i}| \tpi^j_{m_n+i}
 \end{eqnarray*}
moreover we can check that the numerator of $\Rf_{m_n+l}^j$: $|\Ct_{m_n+l}^j|/|\Ct_{m_n}| \pi_{m_n+l}^j \exp(S_{m_n})$ is bounded $P_e.a.s.$ so we get \ref{2.18}.


\begin{Lem}  There exists a sequence of non-negative numbers $(p_j, 1 \leq j \leq n_d)$,  with $\sum_{j=1}^{n_d} p_j=1$ such that  
\begin{eqnarray*}
\lim_{n \rightarrow + \infty } P_e\left( \bigcap_{r=- (\log n)^{2-\epsilon}}^{(\log n)^{2-\epsilon}} \bigcap_{j=1}^{n_d} \left\{ \left| |\Ct_{m_n+r}^j|/|\Ct_{m_n}|-p_j\right| \leq g(n) \right\} \right)=1,
\end{eqnarray*}
where $g$ is a positive decreasing function with $\lim_{n \rightarrow + \infty}g(n)=0$.
 \end{Lem}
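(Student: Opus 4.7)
The plan is to exploit the i.i.d.\ structure of the field $(\delta_x)_{x \in \Z^d}$ and its independence from the potential process $S$. First I would define $p_j$ as the $P_2$-probability that the local $\delta$-configuration $(\delta_x, \delta_{x \pm e_i},\ 1 \leq i \leq d)$ at an arbitrary interior face point $x \in \bar{\Ct}_k$ produces the label $j$, i.e.\ yields the vector $(a_0^j, a_1^j, \ldots, a_{2d}^j)$ appearing in \eqref{valpi}. Because the $\delta$'s are i.i.d.\ on $\Z^d$ and independent of $S$, translation invariance together with the symmetry among the $2d$ faces of the hypercube boundary guarantee that this probability depends neither on $k$ nor on which face $x$ sits on; this makes $p_j$ unambiguously defined, and exhausting all possible configurations yields $\sum_{j=1}^{n_d} p_j = 1$.

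Next I would run a variance argument. Writing
\begin{equation*}
|\bar{\Ct}_k^j| = \sum_{x \in \bar{\Ct}_k} \un_{\{x \text{ has configuration of type } j\}},
\end{equation*}
each indicator depends only on the $\delta$'s at $x$ and its $2d$ nearest neighbors, so two indicators at points $x, y$ with $|x - y|_\infty > 2$ are independent. Hence $\V_{P_2}(|\bar{\Ct}_k^j|) \leq \ct\, |\bar{\Ct}_k|$ by counting the $O(1)$ non-zero covariances per site, and Chebyshev's inequality yields
\begin{equation*}
P_2\!\left( \left| \frac{|\bar{\Ct}_k^j|}{|\bar{\Ct}_k|} - p_j \right| > g(n) \right) \leq \frac{\ct}{g(n)^2\, |\bar{\Ct}_k|}.
\end{equation*}
On (a possibly slightly shrunken version of) the event of Lemma \ref{lemenv} one has $m_n + r \gtrsim (\log n)^{2-\epsilon}$ uniformly in $|r| \leq (\log n)^{2-\epsilon}$, so $|\bar{\Ct}_{m_n+r}| \gtrsim (\log n)^{(2-\epsilon)(d-1)}$. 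A union bound over $|r| \leq (\log n)^{2-\epsilon}$ and $1 \leq j \leq n_d$ gives total failure probability of order $(\log n)^{2-\epsilon} / (g(n)^2 (\log n)^{(2-\epsilon)(d-1)})$, which tends to $0$ as soon as $d \geq 2$, provided $g(n)$ is chosen to decay slowly enough, e.g.\ $g(n) = (\log n)^{-\gamma}$ for a small $\gamma>0$.

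Finally I would pass from $|\bar{\Ct}_{m_n+r}^j|/|\bar{\Ct}_{m_n+r}|$ to $|\Ct_{m_n+r}^j|/|\Ct_{m_n}|$ by two cheap corrections: the non-bulk contribution $|\Ct_k \setminus \bar{\Ct}_k|/|\Ct_k| = O(1/k)$ from Remark \ref{1.1}, and the geometric ratio $|\Ct_{m_n+r}|/|\Ct_{m_n}| = (1 + r/m_n)^{d-1}(1+o(1))$, again from Remark \ref{1.1}, with $|r|/m_n \to 0$ by the same scale comparison. Both corrections are $o(1)$ uniformly in $r$ and can be absorbed into $g(n)$.

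The main obstacle is maintaining uniformity in $r$ after the union bound: the Chebyshev gain $|\bar{\Ct}_k|^{-1} \sim k^{-(d-1)}$ must beat the number $\sim (\log n)^{2-\epsilon}$ of values of $r$, which is exactly where $d \geq 2$ enters. A secondary subtlety is that the $p_j$'s must be intrinsically determined by the $\delta$-field alone, independently of $S$ (and hence of $m_n$, $M_n$); this is why the factorization $P_e = P_1 \otimes P_2$ built into the model is essential here.
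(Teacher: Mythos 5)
Your overall strategy---defining $p_j$ as the $P_2$-probability of the local $\delta$-configuration at a face-interior point and then running a second-moment bound, Chebyshev, and a union bound over $r$ and $j$---is genuinely different from the paper's. The paper's own argument is terse: it observes that $|\Ct_{m_n+r}|/|\Ct_{m_n}|\to 1$ and then appeals to Kolmogorov's zero--one law to assert that $\lim_{k}|\Ct_{k}^j|/|\Ct_{k}|$ exists $P_2$-a.s.\ and calls the limit $p_j$. Strictly speaking the $0$--$1$ law only forces $\liminf$ and $\limsup$ of that ratio to be a.s.\ constants; some law-of-large-numbers input (precisely the variance estimate you supply, exploiting the finite dependence range of the indicators) is what actually closes the gap between them. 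So your route is the more explicit one, it produces quantitative error bounds, and its identification of $p_j$ via translation and face symmetry is correct.

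There is, however, a concrete gap at $d=2$. You bound $|\bar{\Ct}_{m_n+r}|\gtrsim (\log n)^{(2-\epsilon)(d-1)}$ and union over $O((\log n)^{2-\epsilon})$ values of $r$, so your total failure probability is of order $(\log n)^{(2-\epsilon)(2-d)}/g(n)^2$. For $d=2$ this is $\mathrm{const}/g(n)^2$, which does \emph{not} tend to zero when $g(n)\to 0$---contrary to your claim that the bound vanishes ``as soon as $d\geq 2$''. The issue is that you used the same exponent $2-\epsilon$ for both the lower bound on $m_n+r$ and the number of $r$-values, so the Chebyshev gain is exactly cancelled by the union bound. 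The fix is to decouple the two scales: item 2 of Lemma \ref{lemenv} holds for every $\epsilon'>0$, so on a high-probability event $m_n\geq (\log n)^{2-\epsilon'}$ for $\epsilon'<\epsilon$, whence $m_n+r\gtrsim (\log n)^{2-\epsilon'}$ uniformly over $|r|\leq (\log n)^{2-\epsilon}$ and $|\bar{\Ct}_{m_n+r}|\gtrsim (\log n)^{(2-\epsilon')(d-1)}$. The union bound then gives $(\log n)^{(2-\epsilon)-(2-\epsilon')(d-1)}/g(n)^2$, which for $d=2$ equals $(\log n)^{\epsilon'-\epsilon}/g(n)^2$ and does vanish for $g(n)=(\log n)^{-\gamma}$ with $\gamma<(\epsilon-\epsilon')/2$. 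With that adjustment your argument goes through for all $d\geq 2$.
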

 
 \begin{Pre}
 First, by independence of the $\delta's$ , it is easy to show that
 \begin{eqnarray*}
 \lim_{n \rightarrow + \infty } P_e\left( \bigcap_{r=- (\log n)^{2-\epsilon}}^{(\log n)^{2-\epsilon}} \left\{ \left| |\Ct_{m_n+r}|/|\Ct_{m_n}|-1\right| \leq g_1(n) \right\} \right)=1,
 \end{eqnarray*}
 where $g_1=(\log n)^{-\gamma_0}$, with some $\gamma_0>0$. We recall that $n_d$ is bounded and that $\sum_{j=1}^{n_d} |\Ct_{m_n+r}^j|/|\Ct_{m_n+r}|=1$, moreover by Kolmogorov's zero-one law, for all $j$ and $r$, $\lim_{n \rightarrow \infty}|\Ct_{m_n+r}^j|/|\Ct_{m_n+r}|$ exists $P_2.a.s$ and we call this limit $p_j$. This finish the proof.
 \end{Pre} \\
The above Lemma yields
\begin{eqnarray}
\lim_{n \rightarrow + \infty }P_e\left( \left| \frac{1}{|\Ct_{m_n}|}\sum_{i=-(\log n)^{2-\epsilon}}^{(\log n)^{2-\epsilon}} \sum_{j=1}^{n_d} |\Ce^j_{m_n+i}| \ttpi^j_{m_n+i}-\tilde{\sum}_{(\log n)^{2-\epsilon}}\right| \geq g(n) \tilde{\sum}_{(\log n)^{2-\epsilon}}\right)=0, \label{2.19b}
\end{eqnarray}
where 
\begin{eqnarray}
\tilde{\sum}_l:=\sum_{i=-l}^{l}   \tilde{F}_n(i),\textrm{ and }  \tilde{F}_n(r) = \sum_{j=1}^{n_d} p_j \ttpi^j_{m_n+r}. \label{2.19ab}
\end{eqnarray}
So \ref{2.18} and \ref{2.19b} leads to, for all $\epsilon_0''>0$
\begin{eqnarray*}
\lim_{n \rightarrow + \infty}P_e\left(\bigcap_{-k \leq l \leq k} \left\{ \left|\sum_{j=1}^{n_d}\Rf_{m_n+l}^j - \tilde{F}_n(l)/\tilde{\sum}_{(\log n)^{2-\epsilon}}\right| \geq \epsilon_0'' \right\} \right)=0. 
\end{eqnarray*}
\\
We are now moving to the convergence in law, we need to prove the following, let $\alpha\geq 0$
\begin{eqnarray}
\lim_{n \rightarrow + \infty} P_1\left(\tilde{F}_n(l)/\tilde{\sum}_{(\log n)^{2- \epsilon}} \leq \alpha \right) = P_1\left(\bar{F}(l)/\bar{\sum}_{\infty} \leq \alpha \right). \label{endloi}
\end{eqnarray}
\textit{First step} Let us denote $A_{n,n_1,\epsilon_1}=\left\{\left| \tF_n(l)/\tS_{n_1}-\tF_n(l)/\tSn \right| \leq \epsilon_1 \right\}$, with $\epsilon_1>0$. We show that 
\begin{eqnarray}
\DL P_1(A_{n,n_1,\epsilon_1})=1. \label{step1}
\end{eqnarray}
Thanks to hypothesis \ref{hyp3}, we only have to check that $\DL P_1\left(\left|1/\tS_{n_1}-1/\tSn \right| > \epsilon_1 \right)$ $=0$, keeping the same $\epsilon_1$ even if it changes a little bit. Assume $n_1 < (\log n)^{2- \epsilon}$, a little of computations yields
\begin{eqnarray*}
P_1\left(\left|1/\tS_{n_1}-1/\tSn \right| > \epsilon_1 \right) \leq P_1\left(\tS_{n_1, (\log n)^{2-\epsilon}}> \epsilon_1 (\tS_{n_1})^2 \right),
\end{eqnarray*}
then thanks to hypothesis \ref{hyp3} again, $\tS_{n_1} \geq \ct  >0$, statement 2 of Lemma \ref{lemenv} and \ref{lemP} we get 
\begin{eqnarray*}
P_1\left(\tS_{n_1, (\log n)^{2-\epsilon}}> \epsilon_1 (\tS_{n_1})^2 \right)\leq \frac{1}{\ct \epsilon_1}\frac{1}{\sqrt{n_1}}.
\end{eqnarray*}
To finish we take the limit for $n$, and finally for $n_1$ so we get \ref{step1}. \\
\textit{Second step} By using \ref{step1}, we easily get that
\begin{eqnarray*}
\DL P_1\left(\frac{\tilde{F}_n(l)}{\tS_{n_1}} \leq \alpha-\epsilon_1 \right) \leq \lim_{n \rightarrow + \infty} P_1\left(\frac{\tilde{F}_n(l)}{\tilde{\sum}_{(\log n)^{2- \epsilon}}}  \leq \alpha\right) \leq \DL P_1\left(\frac{\tilde{F}_n(l)}{\tS_{n_1}} \leq \alpha+\epsilon_1 \right),
\end{eqnarray*}
moreover from \cite{Golosov} Lemma 4 we know that the finite distribution of $(S_{m_n+i}-S_{m_n},i \in \Z)$ converge to those of $(\bar{S}_l,l)$ defined Section \ref{parSp}, that is to say
\begin{eqnarray*} 
\left( \frac{\tilde{F}_n(l)}{\tS_{n_1}}, -k \leq l \leq k \right) \rightarrow \left( \frac{ \bar{F}(l)}{\bS_{n_1}}, -k \leq l \leq k \right) , \ \textrm{in $P_1$ law} 
\end{eqnarray*}
where $\bar{F}$ (resp. $\bar{\sum}$) is given by \ref{2.19ab}, replacing $\tF_n$ by $\bar{F}$ (resp. $\tS$ by $\bar{\sum}$). The above inequality becomes 
\begin{eqnarray}
\lim_{n_1 \rightarrow + \infty} P_1\left(\frac{\bF(l)}{\bS_{n_1}} \leq \alpha-\epsilon_1 \right) \leq \lim_{n \rightarrow + \infty} P_1\left(\frac{\tilde{F}_n(l)}{\tilde{\sum}_{(\log n)^{2- \epsilon}}}  \leq \alpha\right) \leq\lim_{n_1 \rightarrow + \infty} P_1\left(\frac{\bar{F}(l)}{\bS_{n_1}} \leq \alpha+\epsilon_1 \right). \label{step2}
\end{eqnarray}
We are almost done: let $\gamma_1>0$, denote $B_{n_1}:= \left\{ (1-\gamma_1) \bar{\sum}_{+\infty} \leq  \bar{\sum}_{n_1} \leq \bar{\sum}_{+\infty} \right \}$, for all $\mu \geq 0$, we have
\begin{eqnarray*}
P_1\left(\bar{F}(l)/(\bar{\sum}_{+\infty}(1-\gamma_1)) \leq \mu,\ B_{n_1} \right) \leq P_1\left(\bar{F}(l)/\bar{\sum}_{n_1} \leq \mu,B_{n_1} \right)  \leq P_1\left(\bar{F}(l)/\bar{\sum}_{+\infty} \leq \mu  \right)
 \end{eqnarray*}
  thanks to Bertoin \cite{Bertoin}, $\lim_{n_1 \rightarrow + \infty }P\left( B_{n_1} \right)=1 $, so 
\begin{eqnarray*}
P_1\left(\bar{F}(l)/(\bar{\sum}_{+\infty}(1-\gamma_1)) \leq \mu \right) \leq \lim_{n_1 \rightarrow + \infty}P_1\left(\bar{F}(l)/\bar{\sum}_{n_1} \leq \mu \right)  \leq P_1\left(\bar{F}(l)/\bar{\sum}_{+\infty} \leq \mu  \right)  
  \end{eqnarray*} 
 letting $\gamma_1$ goes to zero, inserting the result in \ref{step2} and letting $\epsilon_1$ goes to zero we get \ref{endloi}
 \end{Pre}
\\
Proposition \ref{cvloi2} is a consequence of \ref{2.8} and Lemma \ref{2.6c}.

\subsection{Proof of Theorem \ref{th1.1}}

The proof can be deduced from Proposition \ref{cvloi2} and the same method exposed in \cite{GanPerShi} pages 6 and 7.


\section{Appendix}

\subsection{Basic formula for reversible random walks}


We recall and shortly prove basic results on the moments of the local time of nearest neighborhood reversible random walks. Assume that $\A$ and $\A'\subset \Z^d$ are two sets where the reversible measure $\pi$ is constant, also we recall that the different hitting times $T$ are defined at the begining of Section \ref{th1.3}, we have 

\begin{Lem} \label{Lemmoy} Let $x\in \Z^d$, $y\in \A$, $v \in \A'$ then
\begin{eqnarray} 
 \sum_{z \in \A}\Ea_z(\lo(x,T^+_{\A}))=\frac{\pi(x)}{\pi(y)}, \label{ap1} \\
 \sum_{z \in \A}\Ea_z(\lo(\A',T^+_{\A}))=|\A'|\frac{\pi(v)}{\pi(y)}, \label{ap2}
 \end{eqnarray}
 where $|\A'|$ is the size (cardinal) of $\A'$. 
\end{Lem}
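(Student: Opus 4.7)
The plan is to prove \ref{ap1} and then obtain \ref{ap2} as an immediate consequence: summing \ref{ap1} over $x \in \A'$ and using that $\pi$ is constant on $\A'$ equal to $\pi(v)$,
\begin{eqnarray*}
\sum_{z\in \A}\Ea_z(\lo(\A',T^+_{\A})) = \sum_{x\in \A'} \sum_{z\in \A}\Ea_z(\lo(x,T^+_{\A})) = |\A'|\frac{\pi(v)}{\pi(y)}.
\end{eqnarray*}

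For \ref{ap1}, my first move is to expand
\begin{eqnarray*}
\Ea_z(\lo(x,T^+_{\A})) = \sum_{k \geq 1} \pa_z(X_k = x,\, T^+_{\A} \geq k),
\end{eqnarray*}
and observe that for $z \in \A$ the event $\{X_k = x,\, T^+_{\A} \geq k\}$ coincides with $\{X_1,\ldots,X_{k-1} \notin \A,\, X_k = x\}$. The central step is the path-reversal identity coming from reversibility of $p$: for every trajectory $z=z_0,z_1,\ldots,z_{k-1},z_k=x$,
\begin{eqnarray*}
\pi(z)\,p(z,z_1)p(z_1,z_2)\cdots p(z_{k-1},x) = \pi(x)\,p(x,z_{k-1})\cdots p(z_1,z),
\end{eqnarray*}
obtained by iterating $\pi(z_i)p(z_i,z_{i+1}) = \pi(z_{i+1})p(z_{i+1},z_i)$. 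Multiplying the probability $\pa_z(X_1,\ldots,X_{k-1}\notin\A,\,X_k=x)$ by $\pi(z)$, summing first over the intermediate vertices in $\Z^d \setminus \A$ and then over $z \in \A$, and invoking this identity path by path turns the result into $\pi(x)\,\pa_x(T_\A = k)$ (the reversed trajectory starts at $x$ and enters $\A$ for the first time at step $k$). Summing over $k \geq 1$ telescopes into
\begin{eqnarray*}
\sum_{z \in \A} \pi(z)\,\Ea_z(\lo(x,T^+_{\A})) = \pi(x)\,\pa_x(T_\A < +\infty) = \pi(x),
\end{eqnarray*}
where the last equality uses recurrence of the walk, guaranteed by the setting of Section 1.1. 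Dividing by the common value $\pi(y)$ of $\pi$ on $\A$ produces \ref{ap1}.

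The argument is short, and the only point that requires a tiny extra look is the case $x\in\A$: then only $k = T^+_{\A}$ contributes and $\lo(x,T^+_{\A}) = \un_{X_{T^+_{\A}}=x}$, but the path-reversal identity applies verbatim to the trajectories defining the hitting distribution on $\A$ from $x$, so the same conclusion holds without modification. The main (and essentially only) ingredient is therefore reversibility together with recurrence; there is no real obstacle to overcome.
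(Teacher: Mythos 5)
Your proof is correct, but it runs along a different track than the paper's. The paper starts from the closed-form identity
\begin{eqnarray*}
\Ea_z(\lo(x,T^+_{\A}))=\frac{\pa_z(T_x<T^+_{\A})}{\pa_x(T_x^+>T^+_{\A})},
\end{eqnarray*}
then rewrites the denominator using the reversibility identity $\pa_x(T_x^+>T^+_{\A})=\sum_{u \in \A}\frac{\pi(u)}{\pi(x)}\pa_u(T_x<T^+_{\A})$; summing over $z\in\A$ and using $\pi\equiv\pi(y)$ on $\A$ makes the two sums of hitting probabilities cancel. You instead work at the level of paths: you expand the expected local time into one-step events, apply the time-reversal identity $\pi(z_0)\prod p(z_i,z_{i+1})=\pi(z_k)\prod p(z_{i+1},z_i)$ term by term, and sum over $z\in\A$ and $k$ to reach $\pi(x)\pa_x(T_\A<\infty)=\pi(x)$, then normalize by $\pi(y)$. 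Both arguments are ultimately the same reversibility computation, but yours is more self-contained (it does not appeal to the geometric/strong-Markov structure behind the first displayed identity) at the cost of being longer, and it makes the role of recurrence explicit, which the paper leaves implicit in its first identity. Your treatment of the edge case $x\in\A$ and the derivation of \ref{ap2} from \ref{ap1} both match the paper (which simply says \ref{ap2} ``follows immediately'').
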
 
\noindent For completness, a few words of the proof  \\
\begin{Pre} For \ref{ap1} It is easy to get that 
\begin{eqnarray*}
 \Ea_z(\lo(x,T^+_{\A}))=\frac{\pa_z(T_x<T^+_{\A})}{\pa_x(T_x^+>T^+_{\A})}, 
 \end{eqnarray*}
moreover the chain is reversible therefore 
\begin{eqnarray*}
 \pa_x(T_x^+>T^+_{\A})=\sum_{u \in \A}\frac{\pi(u)}{\pi(x)}\pa_u(T_x<T^+_{\A}), 
 \end{eqnarray*}
 so we get the lemma by definition of $\A$. \ref{ap2} follows immediatly from \ref{ap1}. 
\end{Pre} 

\begin{Lem} \label{Lemvar}  Let $x \in \A$ and $z \notin \A$ then
\begin{eqnarray} 
\sum_{y \in \A}\Ea_{y}\left[\left(\lo(z,T^+_{\A})-\frac{ \pi(z)}{|\A|\pi(x)}\right)^2\right] \leq  \frac{2\pi(z)}{\pi(x)} \frac{1}{\pa_z(T_z^+>T^+_{\A})}.
\end{eqnarray}
\end{Lem}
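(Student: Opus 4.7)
The plan is to expand the square and reduce the problem to bounding the second moment of $\lo(z,T^+_\A)$ summed over starting points in $\A$. Setting $m := \pi(z)/(|\A|\pi(x))$ and using that $\pi$ is constant on $\A$ equal to $\pi(x)$, equation \ref{ap1} of Lemma \ref{Lemmoy} gives $\sum_{y \in \A}\Ea_y[\lo(z,T^+_\A)] = \pi(z)/\pi(x) = |\A|m$. Therefore the expansion collapses to
$$\sum_{y \in \A}\Ea_y\left[(\lo(z,T^+_\A)-m)^2\right] = \sum_{y \in \A}\Ea_y[\lo(z,T^+_\A)^2] - |\A|m^2,$$
so it is enough to control the second-moment sum and simply drop the (nonnegative) subtracted term at the end.

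The second moment is handled by the classical excursion decomposition at $z$. Starting from $y \in \A$, either the walk returns to $\A$ before hitting $z$ (contributing zero to $\lo(z,T^+_\A)$), or it first reaches $z$ at time $T_z < T^+_\A$. By the strong Markov property applied at $T_z$, and using that $T^+_\A = T_\A$ when starting from $z \notin \A$, the number of successive visits to $z$ before the walk enters $\A$ is geometric with success parameter $q := \pa_z(T_z^+ > T^+_\A)$. For such a geometric random variable $N$ one has the standard $E[N^2] = (2-q)/q^2 \leq 2/q^2$, which yields
$$\Ea_y[\lo(z,T^+_\A)^2] = \pa_y(T_z < T^+_\A)\,\frac{2-q}{q^2} \leq \frac{2}{q^2}\,\pa_y(T_z < T^+_\A).$$

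To sum this over $y \in \A$, the ingredient needed is $\sum_{y \in \A}\pa_y(T_z < T^+_\A) = q\,\pi(z)/\pi(x)$. This follows immediately from rewriting Lemma \ref{Lemmoy} via the identity $\Ea_y[\lo(z,T^+_\A)] = \pa_y(T_z < T^+_\A)/q$, which is itself a one-line consequence of the same strong Markov reduction. Combining everything gives
$$\sum_{y \in \A}\Ea_y[\lo(z,T^+_\A)^2] \leq \frac{2}{q^2}\cdot q\cdot \frac{\pi(z)}{\pi(x)} = \frac{2\pi(z)}{\pi(x)\,\pa_z(T_z^+>T^+_\A)},$$
and discarding the nonnegative term $|\A|m^2$ yields exactly the announced bound. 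I do not expect any real obstacle here; the two points requiring care are identifying the correct geometric parameter in the excursion decomposition and re-deriving the reversibility-type identity for $\sum_y \pa_y(T_z<T^+_\A)$ in the same spirit as the proof of Lemma \ref{Lemmoy}.
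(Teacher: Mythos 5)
Your proof is correct and follows essentially the same route as the paper: expand the square, use Lemma \ref{Lemmoy} for the linear term, compute $\sum_{y\in\A}\Ea_y[\lo(z,T^+_\A)^2]$ via the geometric excursion count with parameter $q=\pa_z(T_z^+>T^+_\A)$, and drop the nonnegative correction $|\A|m^2$. Incidentally, your exact value $\sum_{y\in\A}\Ea_y[\lo(z,T^+_\A)^2]=\frac{(2-q)\pi(z)}{q\,\pi(x)}$ is the correct one; the equality displayed in the paper's proof, $\frac{\pi(z)}{q\,\pi(x)}+\frac{\pi(z)}{\pi(x)}$, appears to contain a small slip, although both expressions are bounded by $\frac{2\pi(z)}{q\,\pi(x)}$ and hence yield the stated lemma.
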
 
\begin{Pre} 
With the same idea as the proof above we easily get 
\begin{eqnarray*} 
\sum_{y \in \A}\Ea_{y}\left[\left(\lo(z,T^+_{\A})\right)^2\right] = \frac{\pi(z)}{\pi(x)} \frac{1}{\pa_z(T_z^+>T^+_{\A})}+ \frac{\pi(z)}{\pi(x)},
\end{eqnarray*}
and using Lemma  \ref{Lemmoy} for the other term we finish the proof. 
\end{Pre}

\noindent \\ 
For the study of the excursion of the walk we also need the following elementary results: 
\begin{Lem} Let $x\in \Z^d$, $y\in \A$ and $v\in \A'$, and $l \in \N^*$ then
\begin{eqnarray} 
& & \sum_{z \in \A}\Ea_z(\lo(x,T^+_{\A,l}))=l \frac{\pi(x)}{\pi(y)}, \label{ap3}\\ 
 & &  \sum_{z \in \A}\Ea_z(\lo(\A',T^+_{\A,l}))=l|\A'|\frac{\pi(v)}{\pi(y)} \label{ap4}.
 \end{eqnarray}
\end{Lem}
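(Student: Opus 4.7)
The plan is to proceed by induction on $l$, with the base case $l=1$ being Lemma~\ref{Lemmoy} (and the trivial case $l=0$ corresponding to $T^+_{\A,0}=0$). For the inductive step I would split the local time along the first excursion and the remaining $l-1$ excursions,
\begin{eqnarray*}
\lo(x,T^+_{\A,l}) = \lo(x,T^+_{\A}) + \sum_{k=T^+_{\A}+1}^{T^+_{\A,l}} \un_{X_k=x},
\end{eqnarray*}
and apply the strong Markov property at $T^+_{\A}$. Conditional on $X_{T^+_{\A}}=w$ with $w\in \A$, the second piece is distributed as $\lo(x,T^+_{\A,l-1})$ under $\pa_w$, whence
\begin{eqnarray*}
\Ea_z(\lo(x,T^+_{\A,l})) = \Ea_z(\lo(x,T^+_{\A})) + \sum_{w\in\A} \pa_z(X_{T^+_{\A}}=w)\,\Ea_w(\lo(x,T^+_{\A,l-1})).
\end{eqnarray*}

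The key step is to sum this identity over $z\in\A$. Lemma~\ref{Lemmoy} handles the first term, which contributes $\pi(x)/\pi(y)$. For the second term I expect the identity $\sum_{z\in\A}\pa_z(X_{T^+_{\A}}=w)=1$ to hold for every $w\in\A$, and this is the place where the hypothesis that $\pi$ is constant on $\A$ really matters: reversibility of $X$ forces the restriction of $\pi$ to $\A$ to be stationary for the trace chain on $\A$ defined by $P(z,w):=\pa_z(X_{T^+_{\A}}=w)$, i.e.\ $\sum_{z\in\A}\pi(z)P(z,w)=\pi(w)$, and constancy of $\pi$ on $\A$ collapses this to $\sum_{z\in\A}P(z,w)=1$. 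Once this is in hand, the double sum collapses to $\sum_{w\in\A}\Ea_w(\lo(x,T^+_{\A,l-1}))$, and the induction hypothesis yields $(l-1)\pi(x)/\pi(y)$. Adding the two contributions produces $l\pi(x)/\pi(y)$, which is \ref{ap3}.

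Equation \ref{ap4} should then come essentially for free: since $\pi$ is constant and equal to $\pi(v)$ on $\A'$, summing \ref{ap3} over $x\in\A'$ gives exactly $l|\A'|\pi(v)/\pi(y)$. I expect the only genuinely delicate point to be the stationarity argument for the trace chain; it is standard, but worth spelling out explicitly because it is precisely what turns Lemma~\ref{Lemmoy} into a clean multiplicative formula in $l$. An alternative route, perhaps conceptually cleaner, would be to observe directly that under the sum $\sum_{z\in\A}\pa_z$ the law of the walk is invariant under the shift by $T^+_{\A,1}$ (because the marginal of $X_{T^+_{\A}}$ under this sum of measures is uniform on $\A$, by the same reversibility argument), so each of the $l$ successive excursions contributes the same quantity $\pi(x)/\pi(y)$ coming from Lemma~\ref{Lemmoy}, and the result follows by additivity of the local time.
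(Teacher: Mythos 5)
Your proof is correct and takes essentially the same route as the paper: decompose the local time at $T^+_{\A}$ via the strong Markov property, sum over $z\in\A$, and use reversibility plus the constancy of $\pi$ on $\A$ to collapse $\sum_{z\in\A}\pa_z(X_{T_{\A}}=u)$ to $1$, then induct. The only cosmetic difference is that the paper justifies the collapse through the symmetry $\pa_z(X_{T_{\A}}=u)=\pa_u(X_{T_{\A}}=z)$ while you invoke stationarity of the trace chain, which is an equivalent phrasing; and deriving \ref{ap4} by summing \ref{ap3} over $x\in\A'$ is likewise the paper's intent.
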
 

\begin{Pre}
If $\A$ is a singleton then it is trivial because the sequence $(T^+_{\A,l}-T^+_{\A,l-1},l)$ is i.d. When $\A$ is not a singleton we can get the result recursively, indeed, we easily have:
\begin{eqnarray*}
\Ea_z(\lo(x,T^+_{\A,l}))& = & \sum_{u \in \A} \Ea_z(\lo(x,T^+_{\A}) \un_{X_{T_{\A}}=u})+\sum_{u \in \A} \Ea_u(\lo(x,T^+_{\A,l-1})) \p_z({X_{T_{\A}}=u}) \\
& \equiv & \Ea_z(\lo(x,T^+_{\A}) )+\sum_{u \in \A} \Ea_u(\lo(x,T^+_{\A,l-1})) \p_z({X_{T_{\A}}=u})
 \end{eqnarray*}
and thanks to the reversibility and the definition of $\A$, $\p_z({X_{T_{\A}}=u})=\p_u({X_{T_{\A}}=z})$, therefore 
\begin{eqnarray*}
\sum_{z \in \A}\Ea_z(\lo(x,T^+_{\A,l}))& = &  \sum_{z \in \A}\Ea_z(\lo(x,T^+_{\A}) )+\sum_{u \in \A} \Ea_u(\lo(x,T^+_{\A,l-1})) 
 \end{eqnarray*}
 so we get the Lemma using \ref{ap1}.
\end{Pre} 
 
\noindent \\ Let us denote  $e_x=\sum_{u \in \A}\Ea_u(\lo(x,T^+_{\A}))$.
 \begin{Lem} \label{lem3.3} Let $x\in \Z^d$, $y\in \A$ and $l \in \N^*$ then
\begin{eqnarray} 
 & & \sum_{z \in \A} \Ea_z\left[\left(\lo(x,T^+_{\A,l})-\frac{l}{|\A|}e_x\right)^2\right] \nonumber \\ 
 & = &  l \sum_{z \in \A} \Ea_z\left[\left(\lo(x,T^+_{\A})-\frac{1}{|\A|}e_x\right)^2\right]  \nonumber\\ 
 &+&2 \sum_{i=1}^{l-1} (l-i)  \sum_{v_i\in \A} \sum_{v \in \A} E_{v_i}\left[ \left(\lo(x,T^+_{\A})-\frac{1}{|\A|}e_x\right)\right]P_v(X_{T^+_{\A,i}}=v_i) E_v\left[\left(\lo(x,T^+_{\A})-\frac{1}{|\A|}e_x\right)\right]
. \nonumber \end{eqnarray}
\end{Lem}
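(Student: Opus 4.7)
The plan is to write the local time up to the $l$-th return to $\A$ as a sum of excursion increments and then exploit the strong Markov property together with the reversibility of the induced chain on $\A$ (which is symmetric because $\pi$ is constant on $\A$). For $i = 1, \ldots, l$ set $\xi_i := \lo(x, T^+_{\A,i}) - \lo(x, T^+_{\A, i-1})$ and $Z_i := \xi_i - e_x/|\A|$, so that $\lo(x, T^+_{\A,l}) - (l/|\A|) e_x = \sum_{i=1}^l Z_i$. Expanding the square separates diagonal contributions $\sum_i Z_i^2$ from cross contributions $2 \sum_{1 \le a < b \le l} Z_a Z_b$, and it remains to compute $\sum_{z \in \A} \Ea_z[\cdot]$ of each.

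For the diagonal terms, the strong Markov property at $T^+_{\A, i-1}$ gives $\Ea_z[Z_i^2] = \sum_{u \in \A} q^{(i-1)}(z, u)\, \Ea_u[(\lo(x, T^+_{\A}) - e_x/|\A|)^2]$, where $q(u,v) := \pa_u(X_{T^+_{\A}} = v)$ denotes the transition of the induced excursion chain on $\A$. Because $\pi$ is constant on $\A$, $q$ is symmetric, so $\sum_{z \in \A} q^{(k)}(z, u) = 1$ for every $k \geq 0$ and every $u \in \A$. Summing over $z$ therefore makes $\sum_{z \in \A} \Ea_z[Z_i^2]$ independent of $i$ and yields the first term $l \sum_u \Ea_u[(\lo(x, T^+_{\A}) - e_x/|\A|)^2]$.

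For a cross term with $a < b$ and gap $i = b - a$, I would apply the strong Markov property at $T^+_{\A, b-1}$ to obtain $\Ea_z[Z_a Z_b] = \Ea_z[Z_a\, f(X_{T^+_{\A, b-1}})]$, where $f(w) := \Ea_w[\lo(x, T^+_\A)] - e_x/|\A|$. Expanding the indicator on $\{X_{T^+_{\A, b-1}} = v_i\}$ and applying the strong Markov property once more, this time at $T^+_{\A, a-1}$, isolates the first excursion; the sum over the starting point $z$ is then handled by the path-reversal identity
\begin{eqnarray*}
\sum_{z \in \A} \Ea_z\bigl[(\lo(x, T^+_\A) - e_x/|\A|)\, \un_{X_{T^+_\A} = w}\bigr] = f(w),
\end{eqnarray*}
which holds because reversing an excursion path from $z$ to $w$ avoiding $\A$ gives an excursion from $w$ to $z$ with the same probability (thanks to $\pi$ being constant on $\A$) and the same number of visits to $x$. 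Collecting the factors and using that there are exactly $l - i$ pairs $(a,b)$ with fixed gap $i$ produces the second term.

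The delicate point is the cross-term step: $Z_a$ and $Z_b$ are coupled through the hidden chain of excursion endpoints, so the two factors cannot be separated naively. The two ingredients that make the computation go through are the symmetry of $q$ on $\A$ (which collapses the sums $\sum_{z} q^{(k)}(z, \cdot)$ to $1$) and the path-reversal identity above; both rely essentially on the constancy of $\pi$ on $\A$, without which detailed-balance ratios $\pi(z)/\pi(w)$ would have to be carried through and the clean identity would fail.
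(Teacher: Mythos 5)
Your argument is correct and rests on exactly the same pillars as the paper's proof: the decomposition into excursion increments, the strong Markov property at the successive return times $T^+_{\A,k}$, and reversibility combined with the constancy of $\pi$ on $\A$ (which makes the induced return chain $q$ on $\A$ symmetric and hence doubly stochastic, and supplies the path-reversal identity $\sum_{z\in\A}\Ea_z[(\lo(x,T^+_\A)-e_x/|\A|)\un_{X_{T^+_\A}=w}]=\Ea_w[\lo(x,T^+_\A)]-e_x/|\A|$). The paper organizes the very same calculation recursively, defining $F(l)=\sum_{z\in\A}\Ea_z[(\lo(x,T^+_{\A,l})-\frac{l}{|\A|}e_x)^2]$, deriving $F(l)=F(l-1)+(\text{one-excursion variance})+2(\text{cross term})$ and then unrolling, whereas you expand $(\sum_a Z_a)^2$ directly and sort the cross terms by gap $i=b-a$; this is the same computation with different bookkeeping.
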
 

 \begin{Pre} The Lemma can be proven recursively,  adding and substrating $\lo(x,T^+_{\A,l-1})$, we get that 
 \begin{eqnarray}
 \left(\lo(x,T^+_{\A,l})-\frac{l}{|\A|}e_x\right)^2 & = &
 \left(\lo(x,T^+_{\A,l-1})-\frac{l-1}{|\A|}e_x\right)^2 +\left(\lo(x,T^+_{\A,l})-\lo(x,T^+_{\A,l-1})-\frac{1}{|\A|}e_x\right)^2  \nonumber \\
 &+& 2   \left(\lo(x,T^+_{\A,l-1})-\frac{l-1}{|\A|}e_x\right) \left(\lo(x,T^+_{\A,l})-\lo(x,T^+_{\A,l-1})-\frac{1}{|\A|}e_x\right) \nonumber
 \end{eqnarray}
 let us denote $F(l)=\sum_{z \in \A}\Ea_z\left( \left(\lo(x,T^+_{\A,l})-\frac{l}{|\A|}e_x\right)^2\right)$ and let $F(0)=0$, using the above decomposition the reversibility of the Markov chain, the definition of $\A$ and the strong Markov property
 \begin{eqnarray*} 
F(l)&=& F(l-1)+ \sum_{z \in \A} \Ea_z\left[\left(\lo(x,T^+_{\A})-\frac{1}{|\A|}e_x\right)^2\right]  \nonumber \\
&+& 2 \sum_{v\in \A}\sum_{z\in \A} \Ea_z\left[ \left(\lo(x,T^+_{\A,l-1})-\frac{l-1}{|\A|}e_x\right)\un_{X_{T^+_{\A,l-1}}=v}\right]\Ea_v\left[\left(\lo(x,T^+_{\A})-\frac{1}{|\A|}e_x\right)\right].
\end{eqnarray*}
The reversibility gives 
\begin{eqnarray*} 
 \sum_{z\in \A} \Ea_z\left[ \left(\lo(x,T^+_{\A,l-1})-\frac{l-1}{|\A|}e_x\right)\un_{X_{T^+_{\A,l-1}}=v}\right]
 &=& \Ea_v\left[ \left(\lo(x,T^+_{\A,l-1})-\frac{l-1}{|\A|}e_x\right)\right]\end{eqnarray*} 
recursively on $l$ we also get that  
\begin{eqnarray*} 
 & & \sum_{v\in \A} \Ea_v\left[ \left(\lo(x,T^+_{\A,l-1})-\frac{l-1}{|\A|}e_x\right)\right]\Ea_v\left[\left(\lo(x,T^+_{\A})-\frac{1}{|\A|}e_x\right)\right] \nonumber \\
 &=&  \sum_{i=1}^{l-1} \sum_{v_i\in \A} \sum_{v \in \A} \Ea_{v_i}\left[ \left(\lo(x,T^+_{\A})-\frac{1}{|\A|}e_x\right)\right]\pa_v(X_{T^+_{\A,i}}=v_i) \Ea_v\left[\left(\lo(x,T^+_{\A})-\frac{1}{|\A|}e_x\right)\right]. 
\end{eqnarray*} 
Putting together what we did above gives the Lemma.
\end{Pre}  
\noindent \\


\begin{Lem} \label{A.14}For all $l \inÊ\N^*$ and $u,u_0 \in \A$, $|\pa_{u_0}\left[X_{T_{\A,l}}=u\right]-1/|\A|| \leq \left(1-1/|\A|\right)^l$. 
\end{Lem}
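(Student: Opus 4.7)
The strategy is to reduce the statement to a mixing estimate for the induced Markov chain on $\A$. Let $Y_l := X_{T_{\A,l}}$; this is a Markov chain on $\A$ with transition kernel $Q(v,w) := \pa_v(X_{T_{\A}^+} = w)$. The detailed balance for $X$ with respect to $\pi$, combined with the hypothesis that $\pi$ is constant on $\A$, forces $\pi(v) Q(v,w) = \pi(w) Q(w,v)$ to collapse to $Q(v,w) = Q(w,v)$. Hence $Q$ is symmetric and doubly stochastic, and the uniform measure on $\A$ is invariant for $Y$.

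Set $f_l(u) := \pa_{u_0}(Y_l = u) - 1/|\A|$; this is mean-zero on $\A$, and using $\sum_v f_l(v) = 0$ one may write the recursion as $f_{l+1}(u) = \sum_v f_l(v) Q(v,u) = \sum_v f_l(v)(Q(v,u) - 1/|\A|)$. I would then prove $\|f_l\|_\infty \le (1-1/|\A|)^l$ by induction on $l$. The base case $l=1$ is immediate: since $Q(u_0,u) \in [0,1]$, one has $|Q(u_0,u) - 1/|\A|| \le \max(1-1/|\A|,\ 1/|\A|) = 1-1/|\A|$ for $|\A| \ge 2$. For the inductive step, I would split the $v$-sum into the part where $Q(v,u) - 1/|\A| \ge 0$ and the part where it is $<0$; each piece is a weighted average of $f_l$ against a sub-probability of total mass $\|Q(\cdot,u) - \mathrm{unif}\|_{\mathrm{TV}} \le 1 - 1/|\A|$. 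Combining these two one-sided averages with the mean-zero property of $f_l$ and with the range constraint $f_l \in [-1/|\A|,\ 1-1/|\A|]$ inherited from its probabilistic origin, one extracts the contraction factor $(1-1/|\A|)$ at each step, closing the induction.

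The main obstacle lies precisely in this inductive contraction: a crude application of $|f_{l+1}(u)| \le \|Q(\cdot,u) - \mathrm{unif}\|_1 \cdot \|f_l\|_\infty$ only yields the looser factor $2(1-1/|\A|)$, which does not close the induction with the claimed rate. Recovering the sharp $(1-1/|\A|)$ requires simultaneously exploiting the mean-zero property of $f_l$ (which produces cancellation between the positive and negative $v$-blocks in the decomposition above) and the fact that $\pa_{u_0}(Y_l = \cdot)$ remains a probability measure (which controls the effective oscillation of $f_l$ entering the estimate, rather than the naive $2\|f_l\|_\infty$). Once this refined one-step contraction is established, the bound $(1 - 1/|\A|)^l$ follows by immediate iteration.
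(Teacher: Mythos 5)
Your setup is sound: the induced chain $Y_l := X_{T_{\A,l}}$ on $\A$ has kernel $Q(v,w) = \pa_v(X_{T^+_{\A}}=w)$, and reversibility of $X$ together with the constancy of $\pi$ on $\A$ does make $Q$ symmetric, hence doubly stochastic, with uniform stationary measure. You also correctly flag that the crude bound on $f_{l+1}(u) = \sum_v f_l(v)\bigl(Q(v,u) - 1/|\A|\bigr)$ yields only $2(1-1/|\A|)\,\|f_l\|_\infty$. The difficulty is that the refinement you sketch --- invoking the mean-zero property of $f_l$ and the range constraint $f_l \in [-1/|\A|,\,1-1/|\A|]$ --- does not produce the contraction factor $(1-1/|\A|)$, and indeed that one-step contraction fails for an arbitrary symmetric doubly stochastic $Q$. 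Take $|\A|=2$, with $Q(v,v)=p$ and $Q(v,w)=1-p$ for $v\neq w$. Then $f_l=(c,-c)$ is automatically mean-zero and in range, yet $f_{l+1}(u_0) = c\,(2p-1)$, so $\|f_{l+1}\|_\infty = |2p-1|\,\|f_l\|_\infty$, which exceeds $\tfrac{1}{2}\|f_l\|_\infty$ as soon as $p>3/4$ or $p<1/4$. Explicitly $\bigl|Q^l(u_0,u_0) - 1/2\bigr| = |2p-1|^l/2$, which for $p$ near $0$ or $1$ decays much more slowly than $(1/2)^l$, so the induction cannot close.

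The constraints you want to exploit are therefore vacuous in the worst case: for $|\A|=2$ mean-zero and the range constraint hold automatically, and the decay rate is governed by $|2p-1|$, not by $1-1/|\A|$. Spectrally, writing $Q=\sum_k\lambda_k\phi_k\phi_k^{\mathsf T}$ with $\phi_1=\mathbf 1/\sqrt{|\A|}$ gives $\bigl|Q^l(u_0,u)-1/|\A|\bigr|\le\lambda_*^l$ with $\lambda_*$ the second-largest eigenvalue modulus of $Q$, and there is no general reason for $\lambda_*\le 1-1/|\A|$ from symmetry and double stochasticity alone. Note that the paper's own ``proof'' is a one-line appeal to a standard reference (reversible chain, finite state space, symmetric kernel); whatever additional input the cited result uses (e.g.\ a quantitative minorization or a spectral hypothesis on the induced kernel) is exactly the missing ingredient, and it is not recovered by the algebraic identities you list. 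As written, the inductive step does not go through.
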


\begin{Pre}
We have a reversible Markov chain with finite space state and symmetric distribution, proof is basic, see \cite{RusPer} for example.
\end{Pre}

\subsection{The Dirchlet method}
The Dirchlet method (see for example \cite{Liggett}) allows us to get a part of the estimate we need in this paper, we recall that for a reversible Markov chain, we have the following elementary result :
Let $z \in \Z^d$, and $A \subset \Z^d$ such that $x \notin A$, $T_{\bar{A}}= \inf \{k>0: X_k \notin A   \}$, $T_z= \inf \{k>0: X_k=z   \}$.  Define also $\Ht_{\bar{A},z}=\{f: \Z^d \rightarrow [0,1]:\ f(z)=0 \textrm{ and }f(y)=1 \textrm{ for }y \notin A\}$, $f_{\bar{A}}(y)=\pa_y(T_{\bar{A}} \leq T_z)\un_{y \neq z}$, and $\Phi(f)= \sum_{y,z}\pi(y)p(y,z)(f(y)-f(z))^2$ then if $\pa_z(T_{\bar{A}}<\infty)=1$ then $2\pi(z)\pa_z(T_{\bar{A}} \leq T_z)=\Phi(f_A)=\min_{f \in \Ht_{\bar{A},z}}\Phi(f)$.
Applied in our context, this leads to
\begin{Lem} \label{Lem4.6} Let $k \in \N^*$, $l \in \N^*$, $h_k$ a function such that $h_k(y)=1$ for any $y \in \B_k$, and $h_k(x)=0$ for all $x \in \bar{\B}_k$ then 
\begin{eqnarray}
& &\Phi(h_k)=\sum_{y\in \Ct_{k+1}}\sum_{z\in \Ct_{k}}e^{-1/2V(y)-1/2V(z)}\un_{|y-z|=1}, 
\end{eqnarray}
let $z\in \B_k$ then 
\begin{eqnarray}
\pa_z(T_z^+ \geq T_{\bar{\Bt}_k}) \leq \frac{\Phi(h_k)}{2\pi(z)}. \label{4.17}
\end{eqnarray}
Let $(z_q, 0<q<m)$, a self avoiding path from $z_0=z$ to $z_m=y$ then 
\begin{eqnarray}
\pa_z(T_y<T_{\bar{\Bt}_k}) \geq 1- \left( \frac{ \Phi(h_k)  m
}{\inf_{q} \pi(z_{q-1},z_q)}  \right)^{1/2},
\end{eqnarray}
$(z_q', 0<q<m')$, a self avoiding path from $z_0'=z \in \Bt_{k}$ to $z_m'=w \in {\Ct}_{k+1}$ and belonging to $\Bt_{k} \smallsetminus \Bt_{\bar{z}-1}$  then
\begin{eqnarray}
\pa_z(T_z^+ \geq T_{\Ct_{k+1}})  \equiv \pa_z(T_z^+ \geq T_{\bar{\Bt}_k}) \geq  \frac{  \inf_{q} \pi(z_{q-1}',z_q') }{2m' \pi(z)},  \label{4.19}
\end{eqnarray}
$(z_q'', 0<q<m'')$, a self avoiding path from $z_0''=z \in \Bt_{k+l} \smallsetminus \Bt_{k}$ to $z_m''=w \in {\Ct}_{k}$ and belonging to $\Bt_{\bar{z}}  \smallsetminus \Bt_{k}$ then
\begin{eqnarray}
\pa_z(T_z^+ \geq T_{\Ct_{k}}) \equiv \pa_z(T_z^+ \geq T_{\Bt_k}) \geq  \frac{  \inf_{q} \pi(z_{q-1}'',z_q'') }{2m'' \pi(z)} . \label{4.19b}
\end{eqnarray}
\end{Lem}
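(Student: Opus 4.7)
My plan is to derive all four items of Lemma \ref{Lem4.6} from the variational principle stated just before it, namely $2\pi(z)\pa_z(T_{\bar A}\leq T_z)=\min_{f\in \Ht_{\bar A,z}}\Phi(f)$, combined with the electric-network interpretation of hitting probabilities.

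First, the identity for $\Phi(h_k)$ unfolds directly from $\Phi(h_k)=\sum_{y,z}\pi(y)p(y,z)(h_k(y)-h_k(z))^2$: since $h_k$ is the indicator of $\Bt_k$, the squared difference is $1$ exactly when the edge $(y,z)$ crosses the boundary between $\Bt_k$ and its complement, which for nearest neighbours in $\Z^d$ forces $\{y,z\}\subset \Ct_k\cup\Ct_{k+1}$ with one vertex in each. Using $\pi(y)p(y,z)=\exp(-V(y)/2-V(z)/2)\un_{|y-z|=1}$ produces the stated sum. The upper bound \ref{4.17} is then immediate: $1-h_k$ equals $0$ at $z$ and $1$ on $\bar \Bt_k$, so it is admissible in $\Ht_{\bar\Bt_k,z}$, and $\Phi(1-h_k)=\Phi(h_k)$ bounds $\min\Phi(f)=2\pi(z)\pa_z(T_{\bar\Bt_k}\leq T_z^+)$ from above.

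For the lower bound on $\pa_z(T_y<T_{\bar\Bt_k})$, I would let $g(w)=\pa_w(T_y<T_{\bar\Bt_k})$ (Dirichlet minimiser with $g(y)=1$, $g\equiv 0$ on $\bar \Bt_k$). Telescoping along the self-avoiding path $z=z_0,\ldots,z_m=y$ gives $1-g(z)=\sum_q(g(z_q)-g(z_{q-1}))$; Cauchy--Schwarz combined with the edge-by-edge bound $\sum_q(g(z_q)-g(z_{q-1}))^2\leq \Phi(g)/\inf_q \pi(z_{q-1},z_q)$ yields $(1-g(z))^2\leq m\,\Phi(g)/\inf_q \pi(z_{q-1},z_q)$. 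Because $h_k$ is admissible for the Dirichlet problem defining $g$ (since $h_k(y)=1$ and $h_k\equiv 0$ on $\bar\Bt_k$), we have $\Phi(g)\leq \Phi(h_k)$, from which the stated inequality follows after taking square roots.

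For \ref{4.19} and \ref{4.19b}, the cleanest route is the electric-network dual $\pa_z(T_z^+\geq T_{\bar\Bt_k})=\mathcal{C}_{\text{eff}}(z,\bar\Bt_k)/\pi(z)$. A unit current supported on the prescribed self-avoiding path from $z$ to a boundary vertex $w$ is an admissible flow, and Thomson's principle bounds the effective resistance from above by the series resistance $\sum_q 1/\pi(z'_{q-1},z'_q)\leq m'/\inf_q \pi(z'_{q-1},z'_q)$, hence $\mathcal{C}_{\text{eff}}\geq \inf_q\pi(z'_{q-1},z'_q)/m'$; the factor $1/2$ in the statement absorbs the symmetric convention of $\Phi$. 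The geometric constraints that the paths stay in $\Bt_k\setminus \Bt_{\bar z-1}$ (respectively $\Bt_{\bar z}\setminus \Bt_k$) only serve to ensure the flow never leaves the admissible region.

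The main obstacle is not probabilistic but a matter of bookkeeping: checking that the conventions (sums over ordered versus unordered edges in $\Phi$, and the precise admissible class $\Ht_{\bar A,z}$ when we swap the roles of $0$ and $1$) line up so that the constants come out as stated, and verifying that $h_k$ genuinely dominates the harmonic minimisers used in (iii) and (iv) once one accounts for the slightly different boundary sets. These are routine once the conventions are fixed, and the underlying probabilistic content is the standard Dirichlet/flow toolkit for reversible Markov chains.
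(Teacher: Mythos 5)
Your proof is correct, and for the first three displays it follows the paper's route almost verbatim: compute $\Phi(h_k)$ directly from the definition, observe that $h_k$ (or $1-h_k$, depending on which of $f(z)=0$ or $f(z)=1$ the convention imposes) is admissible in the Dirichlet variational class, and then combine the minimality of the harmonic function with the path Cauchy--Schwarz bound
\begin{equation*}
\bigl(g(z)-g(y)\bigr)^2 \;\le\; m\,\sum_{q=1}^{m}\bigl(g(z_q)-g(z_{q-1})\bigr)^2 \;\le\; \frac{m\,\Phi(g)}{\inf_q \pi(z_{q-1},z_q)} \;\le\; \frac{m\,\Phi(h_k)}{\inf_q \pi(z_{q-1},z_q)}.
\end{equation*}
The paper phrases this with $g_k(\cdot)=1-\pa_\cdot(T_{\bar\Bt_k}\le T_z)\un_{\cdot\neq z}$, you with $g(\cdot)=\pa_\cdot(T_y<T_{\bar\Bt_k})$; these are the same argument up to a relabelling of the two endpoints of the path.

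The one place you genuinely diverge is \ref{4.19}--\ref{4.19b}. The paper reuses the \emph{same} Cauchy--Schwarz bound on the minimiser $g_k$, exploiting $g_k(z)=1$ and $g_k(w)=0$ for $w\in\bar\Bt_k$, together with $\Phi(g_k)=2\pi(z)\pa_z(T_z^+\ge T_{\bar\Bt_k})$. You instead pass to the flow (Thomson) side of the duality: the escape probability equals $\mathcal{C}_{\mathrm{eff}}(z,\bar\Bt_k)/\pi(z)$, and the unit flow along the prescribed path gives a series-resistance upper bound on $\mathcal{R}_{\mathrm{eff}}$. This is a perfectly valid, strictly dual route, and it buys a marginally cleaner constant: the flow bound yields $\pa_z(T_z^+\ge T_{\bar\Bt_k})\ge \inf_q\pi(z'_{q-1},z'_q)/(m'\pi(z))$, which is a factor of $2$ sharper than the stated inequality. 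Your parenthetical that ``the factor $1/2$ absorbs the symmetric convention of $\Phi$'' is therefore slightly off the mark: the $1/2$ is simply slack introduced by the paper's (deliberately crude) path Cauchy--Schwarz step, not by a convention mismatch; your bound implies the stated one, so there is no gap, but the explanation you give for the constant is not the right one.

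One further small remark: the geometric constraints on the paths in \ref{4.19}--\ref{4.19b} (that they stay in $\Bt_k\smallsetminus\Bt_{\bar z-1}$, respectively $\Bt_{\bar z}\smallsetminus\Bt_k$) play no role in making the flow admissible -- any path from $z$ to the boundary works for Thomson's principle -- they matter only downstream, when the lemma is applied, because they allow one to control $\inf_q\pi(z'_{q-1},z'_q)$ in terms of $\Delta_n$. Worth being aware of, but it does not affect the validity of your argument.
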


\begin{Pre}
The proof can be found, for example, between pages 98 to 101 of \cite{Durett}, for completness we recall the main steps. The first equality and inequality is a direct consequence of what we recalled  above, notice that $h_k \in \Ht_{\bar{\Bt}_k,z}$ and $\Phi(h_k) \geq 2\pi(z)\pa_z(T_{\bar{\Bt}_k} \leq T_z)$. \\
Let us denote $g_{k}(y)=1-\pa_y(T_{\bar{\Bt}_k} \leq T_z)\un_{y \neq z}$. For the third inequality, we have
\begin{eqnarray*}
\Phi(g_k) & \geq & \sum_{q=1}^m \pi(z_{q-1},z_q)(g_k(z_q)-g_k(z_{q-1}))^2 \\
 & \geq & \inf_{1 \leq q \leq m }  \left(\pi(z_{q-1},z_q) \right) \sum_{q=1}^m (g_k(z_q)-g_k(z_{q-1}))^2  \\
 & \geq &  \frac{1}{m}\inf_{1 \leq q \leq m }  \left(\pi(z_{q-1},z_q) \right)  (g_k( z)-g_k(y))^2
\end{eqnarray*}
where the last inequality comes from Cauchy-Schwarz inequality, to finish notice that by definition $\Phi(g_k) \leq \Phi(h_k)$ and $g_k(z)=1$. Finally to prove \ref{4.19}, by following what we did above we get
\begin{eqnarray*}
\Phi(g_k) 
 & \geq &  \frac{1}{m'}\inf_{1 \leq l \leq m' }  \left(\pi(z_{l-1}',z_l') \right) 
\end{eqnarray*}
because $g_k(z)=1$ and $g_k(w)=0$ for all $w \in \bar{\Bt}_k$, the proof is complete because $\Phi(g_k)=2\pi(z)\pa_z(T_z^+>T_{\bar{\Bt}_k}) $. Proof of \ref{4.19b} is the same as above.
\end{Pre}

\bibliographystyle{alpha}

 \bibliography{thbiblio}

\vspace{1cm} \noindent
\begin{tabular}{l}
Laboratoire MAPMO - C.N.R.S. UMR 6628 - F\'ed\'eration Denis-Poisson  \\
Universit\'e d'Orl\'eans, UFR Sciences \\
B\^atiment de math\'ematiques - Route de Chartres \\
B.P. 6759 - 45067 Orl\'eans cedex 2 \\
FRANCE 
\end{tabular}

\end{document}